\numberwithin{equation}{section}
\newtheorem{definition}{Definition}[section]
\title{Incompressible Limit of a Compressible Liquid Crystals System
\footnotetext[0]{2010 Mathematics Subject Classification. Primary: 76N10, 35Q35, 35Q30. }
\footnotetext[0]{This work was supported partly by NSFC grant 11071043, 11131005, 11071069.}
}
\author{Yi-hang Hao\footnote{{\it E-mail address}: 10110180022@fudan.edu.cn.}\qquad Xian-gao Liu\footnote{{\it Corresponding author, E-mail address}: xgliu@fudan.edu.cn,}
\\{ \it\small  School of Mathematic Sciences, Fudan University,
Shanghai, 200433, P.R.China }}
\date{}
\begin{document}
\maketitle
\begin{abstract}
This article is devoted to the study of the so-called incompressible limit for solutions of the compressible liquid crystals system. We consider the problem in the whole space $\mathbb{R}^{\mathbb{N}}$ and a bounded domain of $\mathbb{R}^{\mathbb{N}}$ with Dirichlet boundary conditions. Here the number of dimension $\mathbb{N}=2$ or $3$.
\end{abstract}
\noindent{\bf Key words:} compressible liquid crystals, incompressible liquid crystals, Mach number.
\newtheorem{prn}{\textbf{Proposition}}
\newtheorem{rk}{\textbf{Remark}}
\newtheorem{theorem}{\textbf{Theorem}}
\newtheorem{lemma}{\textbf{Lemma}}
\section {Introduction}
As been well known, Mach number is associated with the compressibility of fluids. So one can derive formally incompressible fluid equations from the compressible ones. This process is called the incompressible limit to solutions of the compressible fluids. For Navier-Stokes equations, there are a lot of results to the problem, see\cite{PlN,bepn,DG}. Here our work shows that the same results also hold to liquid crystals system. In this article, we want to pose a view of the incompressible limit, and give its proofs. We plan to prove this question in
the whole space and in a bounded domain with Dirichlet boundary conditions. Both of them are considered in $\mathbb{R}^{\mathbb{N}},\ \mathbb{N}=2,3$.
\par We recall that A.Majda \cite{maj} introduced the problem in the case of Euler equations. He did it using the non-dimensionlized method to the compressible fluid equations, and talked about the asymptotic properties of solutions to Euler equations. Considering our model is the complex fluid system, the
nondimensionlized method needs us to consider Mach number's inner physical
meaning. We plan to avoid this. In physics, Mach number is defined by $M=\frac{u_m}{\sqrt{\frac{dp}{d\rho}(\rho_m)}}$, where $p(\rho)=a\rho^\gamma$ is the pressure, $\rho_m$ and $u_m$ are $L^\infty$ norm of initial density and velocity respectively. In the macroscopic view, as $M\rightarrow0$, we can viewed the process in a large time scales, the fluid should behave like an incompressible one, the density is almost a constant, and the velocity is small, see \cite{PlN,bepn,DG}.
\par In the article \cite{LH,liu} , they have studied the existence of global weak solutions to the following compressible liquid crystals system.
\begin{align}
&\widetilde{\rho}_t+div(\widetilde{\rho}\widetilde{u})=0\label{comp1},\\
&(\widetilde{\rho} \widetilde{u})_t+div(\widetilde{\rho}\widetilde{u}\otimes\widetilde{u})-\widetilde{\mu}\Delta \widetilde{u}-\widetilde{\xi}\nabla div\widetilde{u}+\nabla \widetilde{p}\nonumber\\
&\qquad=\widetilde{\alpha}\nabla[(F(\widetilde{d})+\frac{1}{2}|\nabla\widetilde{d}|^2)I-\nabla\widetilde{d}\odot\nabla \widetilde{d}+\frac{1}{2\widetilde{\lambda}}(\widetilde{d}\otimes\widetilde{N}-\widetilde{N}\otimes\widetilde{d})],\label{comp2}\\
&\widetilde{d}_t+(\widetilde{u}\cdot\nabla)\widetilde{d}-\widetilde{\Omega}\widetilde{d}=\widetilde{\lambda}(\Delta \widetilde{d}-f(\widetilde{d})).\label{comp3}
\end{align}
where $\widetilde{\rho}\geq0$, $\widetilde{u}\ \in\ \mathbb{R}^\mathbb{N}$ and $\widetilde{d}\ \in\ \mathbb{R}^\mathbb{N}$ are the fluid density, velocity and molecular direction respectively, $I$ denotes a unite matrix, $\widetilde{p}=a\widetilde{\rho}^\gamma$ the pressure, and $\widetilde{N}=\widetilde{d}_t+(\widetilde{u}\cdot\nabla)\widetilde{d}-\widetilde{\Omega}\widetilde{d}$ the director movement in satellited coordinates. $\nabla\widetilde{d}\odot\nabla \widetilde{d}$ is a matrix whose $(i,j)$-th entry is given by $\nabla_i\widetilde{d}\cdot\nabla_j\widetilde{d}$. Also we denote $(\widetilde{d}\otimes \widetilde{N})_{ij}=\widetilde{d}_i\widetilde{N}_j$, $F(\widetilde{d})=\frac{1}{2{\zeta}^2}(|\widetilde{d}|^2-1)^2, f(\widetilde{d})=F'(\widetilde{d})$. Here we don't mention the properties of solutions to the above equations, the reader can refer to \cite{LH,liu}. Summing up what we mentioned above, we scale the variable of equations (\ref{comp1}) - (\ref{comp3}) in the following way:
\begin{align}
\widetilde{x}=x,\quad \widetilde{t}=\epsilon t,\quad\widetilde{\rho}=\rho(x,\epsilon t),\quad \widetilde{u}=\epsilon u(x,\epsilon t),\quad \widetilde{d}=d(x,\epsilon t),\nonumber
\end{align}
and the coefficients are scaled as
\begin{align}
\widetilde{\mu}=\epsilon\mu_\epsilon,\quad \widetilde{\xi}=\epsilon\xi_\epsilon,\quad \widetilde{\lambda}=\epsilon\lambda_\epsilon,\quad \widetilde{\alpha}={\epsilon}^2\alpha_\epsilon.\nonumber
\end{align}
Following above assumption, system (\ref{comp1}) - (\ref{comp3}) become
\begin{align}
&\rho_t+div(\rho u)=0\nonumber,\\
&(\rho u)_t+div(\rho u\otimes u)-\mu_{\epsilon}\Delta u-\xi_{\epsilon}\nabla divu+\frac{1}{{\epsilon}^2}\nabla p\nonumber\\
&\qquad=\alpha_{\epsilon}\nabla[(F(d)+\frac{1}{2}|\nabla d|^2)I-\nabla d\odot\nabla d+\frac{1}{2\lambda_{\epsilon}}(d\otimes N-N\otimes d)],\nonumber\\
&d_t+(u\cdot\nabla)d-\Omega d=\lambda_{\epsilon}(\Delta d-f(d)).\nonumber
\end{align}
We also need the following assumptions,
\begin{align}
\mu_\epsilon\rightarrow \mu>0,\ \mu_\epsilon+\xi_\epsilon\rightarrow\mu+\xi >0,\ \alpha_\epsilon\rightarrow\alpha>0,\ \lambda_\epsilon\rightarrow\lambda>0.\nonumber
\end{align}
Without loss of generality, we assume that $\mu_\epsilon,\ \xi_\epsilon,\ \alpha_\epsilon,\ \lambda_\epsilon$ are independent of $\epsilon$. Letting
$\mu_\epsilon,\ \xi_\epsilon,\ \alpha_\epsilon,\ \lambda_\epsilon$ replace by $\mu,\ \xi,\ \alpha,\ \lambda$,
we consider the following equations:
\begin{align}
&\rho_t+div(\rho u)=0\label{com1},\\
&(\rho u)_t+div(\rho u\otimes u)-\mu\Delta u-\xi\nabla divu+\frac{1}{{\epsilon}^2}\nabla p\nonumber\\
&\qquad=\alpha\nabla[(F(d)+\frac{1}{2}|\nabla d|^2)I-\nabla d\odot\nabla d+\frac{1}{2\lambda}(d\otimes N-N\otimes d)],\label{com2}\\
&d_t+(u\cdot\nabla)d-\Omega d=\lambda(\Delta d-f(d)).\label{com3}
\end{align}
{\bf In the whole space case:}\\We set the initial conditions as follows:
\begin{align}
&\rho_\epsilon|_{t=0}=\rho_\epsilon^0,\quad \rho_\epsilon u_\epsilon|_{t=0}=m_\epsilon^0,\quad d_\epsilon|_{t=0}=d_\epsilon^0,\label{ini1}\\
&\rho_\epsilon^0\geq0,\quad m_\epsilon^0=0\ a.e\ {\rho_\epsilon^0=0},\quad|d_\epsilon^0|=1,\label{ini2}
\end{align}
and
\begin{align}
\int_{R^{\mathbb{N}}}\frac{1}{2}\rho_\epsilon^0|u_\epsilon^0|^2+\frac{\alpha}{2}|\nabla d_\epsilon^0|^2+\alpha F(d_\epsilon^0)+\frac{1}{\epsilon^2(\gamma-1)}[(\rho_\epsilon^0)^\gamma-\gamma(\rho_\epsilon^0-1)-1\leq C,\label{ini3}
\end{align}
where $C$ is independent of $\epsilon$. We assume that
\begin{align}
&\sqrt{\rho_\epsilon^0}u_\epsilon^0\rightarrow u_0\ in\ L^2(R^{\mathbb{N}}),\label{inilimit1}\\
&d_\epsilon^0\rightarrow d_0 \ in \ L^\infty((0,T)\times R^\mathbb{N})\cap\dot{H}^1(R^{\mathbb{N}})\cap\dot{H}^2(R^{\mathbb{N}}).\label{inilimit2}
\end{align}
In addition, we also assume that the reference density is equal to $1$, which means the expect density or the integral mean.
As $\epsilon\rightarrow 0$, we expect that $\rho_\epsilon\rightarrow1$ in some space and the system becomes
\begin{align}
&divu=0\label{incom1},\\
&u_t+div(u\otimes u)-\mu\Delta u+\nabla \pi\nonumber\\
&\qquad\qquad=\alpha\nabla[-\nabla d\odot\nabla d+\frac{1}{2\lambda}(d\otimes N-N\otimes d)],\label{incom2}\\
&d_t+(u\cdot\nabla)d-\Omega d=\lambda(\Delta d-f(d)).\label{incom3}
\end{align}
where $\pi$ is the limit of the $\frac{\rho_\epsilon^\gamma-1}{\epsilon^2}+F(d_\epsilon)+\frac{1}{2}|\nabla d_\epsilon|^2$ in some sense. We define Banach space $\mathcal{H}$ as follows
\begin{eqnarray}
\mathcal{H}(R^{\mathbb{N}})=\{\left.d\ \right|\ d\in L^\infty(R^{\mathbb{N}}),\ d\in \dot{H}^1(R^{\mathbb{N}})\}.\nonumber
\end{eqnarray}
The following Orlicz spaces $L_2^p(R^{\mathbb{N}})$(see\cite{lions2}, pp.288) are needed.
\begin{eqnarray}
L_2^p(R^{\mathbb{N}})=\left\{f\ \in\ L_{local}^1(R^{\mathbb{N}}),\ \left.f\right|_{|f|\leq\frac{1}{2}}\ \in\ L^2(R^{\mathbb{N}}),\ \left.f\right|_{|f|\geq\frac{1}{2}}\ \in\ L^p(R^{\mathbb{N}})\right\}.\nonumber
\end{eqnarray}
\begin{theorem}\label{whole}
Let $\gamma>\frac{3}{2}$, $\rho_\epsilon,\ u_\epsilon,\ d_\epsilon$ be a sequence of global weak solutions to (\ref{com1})--(\ref{com3}) in the whole space with initial conditions (\ref{ini1})--(\ref{ini3}). And in addition, we assume (\ref{inilimit1}), (\ref{inilimit2}).  Then, as $\epsilon\rightarrow0$ we have,
\begin{align}
&\rho_\epsilon\rightarrow1\quad in\ L^\infty(0,T;L^\gamma(R^{\mathbb{N}})),\ if\ \gamma\geq2,\nonumber\\
&\rho_\epsilon\rightarrow1\quad in\ L^\infty(0,T;L_2^\gamma(R^{\mathbb{N}})),\ if\ \gamma<2,\nonumber\\
&u_\epsilon\rightarrow u\quad in \ L^2((0,T)\times R^{\mathbb{N}}),\quad u_\epsilon\rightharpoonup u\qquad in\ L^2(0,T;H^1(R^{\mathbb{N}})),\nonumber\\
&d_\epsilon\rightharpoonup^* d\qquad in\ L^\infty(0,T;\mathcal{H}(R^{\mathbb{N}})),\quad d_\epsilon\rightarrow d\qquad in\ L^2(0,T;W^{1,s}(B_r)),\nonumber\\
&d_\epsilon\rightarrow d\qquad in\ C^0([0,T];L^s(B_r)),\nonumber
\end{align}
where $B_r$ is a bounded ball with radius $r$ centering the origin and
\begin{align}
s\in [2,\infty)\ if\ \mathbb{N}=2,\  s<\frac{2\mathbb{N}}{\mathbb{N}-2}\ if\ \mathbb{N}=3.\nonumber
\end{align}
Here $u,\ d,\ and\ some\ \pi$ satisfy (\ref{incom1})--(\ref{incom3}) in distributions of the whole space with initial conditions $u|_{t=0}=Pu_0,\ d|_{t=0}=d_0$.
\end{theorem}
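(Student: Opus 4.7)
Testing (\ref{com2}) with $u_\epsilon$ and (\ref{com3}) with $-\alpha(\Delta d_\epsilon-f(d_\epsilon))$, and combining with (\ref{com1}), yields the formal energy identity
\begin{align*}
\frac{d}{dt}\int_{\mathbb{R}^{\mathbb{N}}}\Bigl(\tfrac{1}{2}\rho_\epsilon|u_\epsilon|^2 &+\tfrac{\alpha}{2}|\nabla d_\epsilon|^2+\alpha F(d_\epsilon)+\tfrac{1}{\epsilon^2(\gamma-1)}\Pi(\rho_\epsilon)\Bigr)\\
&+\mu\|\nabla u_\epsilon\|_{L^2}^2+\xi\|\mathrm{div}\,u_\epsilon\|_{L^2}^2+\alpha\lambda\|\Delta d_\epsilon-f(d_\epsilon)\|_{L^2}^2\leq 0,
\end{align*}
where $\Pi(\rho):=\rho^\gamma-\gamma(\rho-1)-1\geq 0$ is the usual convex relative pressure. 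Assumption (\ref{ini3}) then produces $\epsilon$-uniform bounds on $\sqrt{\rho_\epsilon}u_\epsilon$ in $L^\infty_tL^2$, on $\nabla u_\epsilon$ in $L^2_{t,x}$, on $\nabla d_\epsilon$ in $L^\infty_tL^2$, on $\Delta d_\epsilon-f(d_\epsilon)$ in $L^2_{t,x}$, and on $\Pi(\rho_\epsilon)/\epsilon^2$ in $L^\infty_tL^1$. Weak compactness extracts limits $u$ in $L^2_tH^1$ and $d$ in $L^\infty_t\mathcal{H}$ along a subsequence.

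\textbf{Density limit and incompressibility.} Since $\Pi(\rho)\sim(\rho-1)^2$ near $1$ and $\Pi(\rho)\sim\rho^\gamma$ as $\rho\to\infty$, the $L^1$ bound on $\Pi(\rho_\epsilon)/\epsilon^2$ controls $\rho_\epsilon-1$ in $L^\infty_tL^2$ on $\{|\rho_\epsilon-1|\leq 1/2\}$ and in $L^\infty_tL^\gamma$ on its complement, both tending to zero with $\epsilon$. Distinguishing the cases $\gamma\geq 2$ and $\gamma<2$ yields the two announced density convergences, hence $\rho_\epsilon\to 1$. Rewriting (\ref{com1}) as $\mathrm{div}\,u_\epsilon=-\partial_t(\rho_\epsilon-1)-\mathrm{div}((\rho_\epsilon-1)u_\epsilon)$ and passing to the distributional limit forces $\mathrm{div}\,u=0$.

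\textbf{Strong convergence of the velocity -- the main obstacle.} The heart of the proof is passing to the limit in $\mathrm{div}(\rho_\epsilon u_\epsilon\otimes u_\epsilon)$, which needs strong convergence of $u_\epsilon$ in $L^2_{\mathrm{loc}}$. I would split $u_\epsilon=Pu_\epsilon+Qu_\epsilon$ with $P$ the Helmholtz projector. Applying $P$ to (\ref{com2}) eliminates both the singular pressure gradient $\nabla p/\epsilon^2$ and the gradient parts of the Ericksen stress, leaving an equation for $P(\rho_\epsilon u_\epsilon)$ whose time derivative is bounded in a negative Sobolev space; Aubin--Lions then supplies strong $L^2_{t,\mathrm{loc}}$ convergence of $Pu_\epsilon$. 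The potential part $Qu_\epsilon$ is coupled to the fluctuation $\sigma_\epsilon:=(\rho_\epsilon-1)/\epsilon$ through an approximate acoustic system
\begin{align*}
\epsilon\,\partial_t\sigma_\epsilon+\mathrm{div}(Qu_\epsilon) &\approx 0,\\
\epsilon\,\partial_t(Qu_\epsilon)+a\gamma\,\nabla\sigma_\epsilon &\approx 0,
\end{align*}
describing waves of speed $1/\epsilon$. Following the Lions--Masmoudi and Desjardins--Grenier strategy for the whole space, dispersion / local energy decay for the acoustic group gives $Qu_\epsilon\to 0$ strongly in $L^2(0,T;L^2_{\mathrm{loc}})$. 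Adding both contributions yields $u_\epsilon\to u$ in $L^2_{t,\mathrm{loc}}$, after which $\rho_\epsilon u_\epsilon\otimes u_\epsilon\to u\otimes u$ in the sense of distributions.

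\textbf{Director field, limit system, and initial data.} For $d_\epsilon$, the $L^\infty_tH^1$ bound together with the $L^2_tH^2_{\mathrm{loc}}$ bound inferred from $\|\Delta d_\epsilon-f(d_\epsilon)\|_{L^2}$ and an estimate on $\partial_td_\epsilon$ read off (\ref{com3}) using the already-established bounds on $u_\epsilon$, let Aubin--Lions furnish the announced strong convergences of $d_\epsilon$ in $L^2_tW^{1,s}(B_r)$ and $C^0_tL^s(B_r)$ for the stated range of $s$. Combined with the strong convergence of $u_\epsilon$, this is enough to pass to the limit in all nonlinearities of (\ref{com2})--(\ref{com3}), in particular $\nabla d_\epsilon\odot\nabla d_\epsilon$, $d_\epsilon\otimes N_\epsilon-N_\epsilon\otimes d_\epsilon$ and $f(d_\epsilon)$. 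Testing (\ref{com2}) against divergence-free fields eliminates the pressure gradient; de Rham's theorem then recovers a limiting pressure $\pi$, identified with the distributional limit of $(\rho_\epsilon^\gamma-1)/\epsilon^2+F(d_\epsilon)+\tfrac12|\nabla d_\epsilon|^2$ modulo a time-dependent constant. The initial datum $u|_{t=0}=Pu_0$ arises because only the divergence-free part of the momentum at $t=0$ survives in the limit, the $Q$-part being radiated away by the acoustic waves.
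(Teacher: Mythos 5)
Your proposal follows essentially the same route as the paper: the energy inequality and the convexity of $\rho^\gamma-\gamma(\rho-1)-1$ for the density convergences, the splitting $u_\epsilon=Pu_\epsilon+Qu_\epsilon$ with a compactness-in-time (product-limit/Aubin--Lions) argument for $Pu_\epsilon$ and the Desjardins--Grenier dispersive (Strichartz) estimates for the acoustic part to kill $Qu_\epsilon$ in $L^2_{t,\mathrm{loc}}$, and parabolic compactness for $d_\epsilon$. The only cosmetic difference is that the paper implements the dispersive step via an explicit mollification $\chi_\delta$ and a Duhamel representation for $\binom{\varphi_\epsilon}{Qm_\epsilon}$, which your sketch subsumes under the citation of that strategy.
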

\noindent{\bf In the case of bounded domain:}\\
Let $D$ denote a bounded domain of $\mathbb{R}^{\mathbb{N}}$ with ($C^{2+a},\ a>0$) boundary conditions. Beside
 the conditions in the whole space case, we give the boundary conditions:
\begin{align}
u_\epsilon=0,\quad d_\epsilon=d_\epsilon^0,\qquad on\ D\times(0,T).\label{ini8}
\end{align}
Also we need the following definition.
\begin{definition}
Let  $D \in R^\mathbb{N}$ be a bounded, connected open set with $C^2$ boundary. We call $D$ is satisfying $H$ if the following overdetermined problem only has trivial solution,
\begin{align}
\left\{\begin{array}{ll}
&-\Delta\phi=\lambda\phi,\ in\ D,\ \lambda\geq0,\\
&{\frac{\partial\phi}{\partial n}}|_{\partial D}=0,\ \phi|_{\partial D}=const.
\end{array}\right.\nonumber
\end{align}
\end{definition}
$H$-condition is defined by B. Desjardins, E. Grenier, P.-L. Lions. and N. Masmoudi in \cite{bepn}. In $\mathbb{R}^2$ $H$-condition is generic. But in $\mathbb{R}^3$, by Schiffer's conjecture, we
know that every bounded domain $D$ satisfies $H$ except the ball\cite{sch}.
\begin{theorem}\label{bounded}
In the Dirichlet boundary case, we have
\begin{align}
&\rho_\epsilon\rightarrow1\quad in\ L^\infty(0,T;L^\gamma(D)),\quad u_\epsilon\rightharpoonup u\qquad in\ L^2(0,T;H_0^1(D)),\nonumber\\
&d_\epsilon\rightharpoonup d\qquad in\ L^2(0,T;H^2(D)),\quad d_\epsilon\rightarrow d\qquad in\ L^2(0,T;W^{1,s}(D)),\nonumber\\
&d_\epsilon\rightarrow d\qquad in\ C^0([0,T];L^s(D)),\nonumber
\end{align}
and $u_\epsilon\rightarrow u\ in\ L^2((0,T)\times D)$ if $D$ satisfies $H$-condition.
$u,\ d,\ and\ some\ \pi$ satisfy (\ref{incom1}) - (\ref{incom3}) in $D'(D)$ with initial conditions $u|_{t=0}=Pu_0,\ d|_{t=0}=d_0$.
\end{theorem}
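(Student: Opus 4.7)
The plan is to combine standard Lions--Masmoudi-type incompressible limit techniques for the fluid part with parabolic compactness for the director. I begin by extracting uniform-in-$\epsilon$ estimates from the energy inequality obtained by testing (\ref{com2}) with $u_\epsilon$ and (\ref{com3}) with $-\alpha(\Delta d_\epsilon - f(d_\epsilon))$, and using (\ref{ini3}): $\sqrt{\rho_\epsilon}\,u_\epsilon \in L^\infty(0,T;L^2(D))$, $\nabla u_\epsilon \in L^2((0,T)\times D)$, $\nabla d_\epsilon \in L^\infty(0,T;L^2(D))$, $\Delta d_\epsilon \in L^2((0,T)\times D)$, together with the relative potential energy $\frac{\rho_\epsilon^\gamma - \gamma(\rho_\epsilon-1) - 1}{\epsilon^2(\gamma-1)} \in L^\infty(0,T;L^1(D))$. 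The Taylor expansion $s^\gamma - \gamma(s-1) - 1 \approx \frac{\gamma(\gamma-1)}{2}(s-1)^2$ near $s=1$ then forces $\rho_\epsilon \to 1$ in $L^\infty(0,T;L^\gamma(D))$ (a bounded domain makes the whole-space $\gamma \geq 2$ vs.\ $\gamma < 2$ dichotomy unnecessary).

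Passing to the limit in (\ref{com3}) is the easier step. Because $d_\epsilon - d_\epsilon^0 \in H_0^1(D)$ and the boundary datum is smooth, elliptic regularity gives $d_\epsilon$ uniformly in $L^2(0,T;H^2(D))$, while (\ref{com3}) provides $\partial_t d_\epsilon \in L^2(0,T;L^2(D))$. The Aubin--Lions lemma yields the strong convergences $d_\epsilon \to d$ in $L^2(0,T;W^{1,s}(D))$ and in $C^0([0,T];L^s(D))$ stated in the theorem, with $s$ in the claimed range by Sobolev embedding. These allow passage to the limit in $(u_\epsilon \cdot \nabla)d_\epsilon$, $\Omega d_\epsilon$ and $f(d_\epsilon)$, provided $u_\epsilon$ converges at least weakly in $L^2(0,T;H_0^1(D))$, which it does by the uniform bounds.

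The principal difficulty lies in the momentum equation (\ref{com2}), whose pressure is $O(1/\epsilon^2)$. Using the Helmholtz decomposition $u_\epsilon = P u_\epsilon + Q u_\epsilon$, the Leray projection of (\ref{com2}) annihilates $\nabla p$ together with the gradient-type director stresses $(F(d) + \tfrac{1}{2}|\nabla d|^2)I$; the resulting equation for $P(\rho_\epsilon u_\epsilon)$ has right-hand side uniformly bounded in a negative Sobolev space, so a second application of Aubin--Lions combined with $\rho_\epsilon \to 1$ yields $P u_\epsilon \to P u = u$ strongly in $L^2((0,T)\times D)$. The gradient part $Q u_\epsilon$ carries the acoustic oscillations and is the main obstacle: from (\ref{com1}) and (\ref{com2}) the pair $\bigl((\rho_\epsilon - 1)/\epsilon,\ Q(\rho_\epsilon u_\epsilon)\bigr)$ satisfies an inhomogeneous linear wave system with propagation speed $1/\epsilon$ and uniformly controlled source. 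In a bounded domain these acoustic waves reflect off $\partial D$ rather than dispersing to infinity, so dispersive estimates, which handled the whole-space case of Theorem \ref{whole}, are unavailable.

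To handle $Q u_\epsilon$ I would follow Desjardins--Grenier--Lions--Masmoudi: expand the acoustic system in the eigenbasis of the Neumann Laplacian on $D$ and write the oscillatory part as $\sum_k a_k^\epsilon e^{\pm i\sqrt{\lambda_k}\,t/\epsilon}$. Upon substitution into $\mathrm{div}(\rho_\epsilon u_\epsilon \otimes u_\epsilon)$ and testing against a divergence-free function, non-resonant terms vanish by rapid oscillation (Riemann--Lebesgue), while resonant contributions can be written as a gradient absorbable into $\pi$ precisely when no nontrivial mode is simultaneously an eigenfunction and constant on $\partial D$ --- this is exactly the $H$-condition, which rules out the degenerate modes that would otherwise produce a genuine Reynolds stress. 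This yields $Q u_\epsilon \to 0$ strongly, hence $u_\epsilon \to u$ strongly in $L^2((0,T)\times D)$. With strong convergence of $u_\epsilon$ in hand, identification of the limit system (\ref{incom1})--(\ref{incom3}) is routine: $\rho_\epsilon u_\epsilon \otimes u_\epsilon \to u \otimes u$, $\nabla d_\epsilon \odot \nabla d_\epsilon \to \nabla d \odot \nabla d$, the remaining pressure fluctuations are absorbed into $\pi$ via de Rham's lemma, and the initial datum becomes $u|_{t=0} = P u_0$ since only the solenoidal component of $u_0$ survives the distributional identification against divergence-free test functions.
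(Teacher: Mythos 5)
Your overall architecture (energy estimates, Aubin--Lions for $d_\epsilon$, time-regularity of $P(\rho_\epsilon u_\epsilon)$ for the strong convergence of $Pu_\epsilon$, spectral analysis of the acoustic system for $Qu_\epsilon$) matches the paper's, but there is a genuine gap in the one step that carries the whole theorem: the treatment of $Qu_\epsilon$ and the role of the $H$-condition. You assert that the resonant contributions to $\mathrm{div}(\rho_\epsilon u_\epsilon\otimes u_\epsilon)$ ``can be written as a gradient \ldots precisely when'' the $H$-condition holds, and that this ``yields $Qu_\epsilon\to0$ strongly.'' Both halves are off. The resonance identity
\begin{align}
\mathrm{div}\left(\nabla\varphi_{k,0}\otimes\nabla\varphi_{l,0}+\nabla\varphi_{l,0}\otimes\nabla\varphi_{k,0}\right)
=-\lambda_{k,0}^2\nabla(\varphi_{k,0}\varphi_{l,0})+\nabla(\nabla\varphi_{k,0}\cdot\nabla\varphi_{l,0})\nonumber
\end{align}
holds for \emph{any} pair of Neumann eigenfunctions with equal eigenvalues and has nothing to do with the $H$-condition; and Riemann--Lebesgue plus this gradient structure only lets you pass to the limit in the momentum equation in the sense of distributions --- it gives weak, never strong, convergence of the oscillating part. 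In the paper this argument is reserved precisely for the undamped modes $k\in\mathcal{J}$, i.e. the modes for which the $H$-condition \emph{fails}, which is why the theorem asserts $u_\epsilon\to u$ in $L^2((0,T)\times D)$ only under the $H$-condition while the limit system is identified in all cases.

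The missing idea is the viscous boundary-layer damping of the acoustic modes (Lemma \ref{lemma5}, following Desjardins--Grenier--Lions--Masmoudi). One must construct approximate eigenfunctions $\phi^\pm_{k,\epsilon,n}$ of the operator $A_\epsilon$ containing the $O(\epsilon)$ viscous perturbation, with boundary-layer correctors in the fast variable $\zeta=L(x)/\sqrt{\epsilon}$; matching orders in $\sqrt{\epsilon}$ produces the eigenvalue correction
\begin{align}
i\lambda_{k,1}^{\pm}=-\frac{1\pm i}{2}\sqrt{\frac{\mu}{2\lambda_{k,0}^3}}\int_{\partial D}|\nabla \varphi_{k,0}|^2,\nonumber
\end{align}
whose real part is strictly negative exactly when $\nabla\varphi_{k,0}$ does not vanish identically on $\partial D$, i.e. exactly when $\varphi_{k,0}$ is not a solution of the overdetermined problem in the definition of the $H$-condition. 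The Duhamel representation of the mode amplitudes then carries the factor $\exp(\mathrm{Re}(i\lambda^\pm_{k,\epsilon,2})\,t/\epsilon)\sim\exp(-c_k t/\sqrt{\epsilon})\rightarrow0$, and it is this dissipation --- not the resonance bookkeeping --- that forces $Qu_\epsilon\rightarrow0$ strongly. Without this construction your proposal establishes the weak convergences and the limit equations, but not the strong convergence $u_\epsilon\rightarrow u$ that is the main assertion of Theorem \ref{bounded} under the $H$-condition.
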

The remaining part of this article is organized as follows. We give some estimates satisfying the two cases in section \ref{2}; in section \ref{3}, we give the convergence of part of $u_\epsilon$ for the two cases; section \ref{four} and section \ref{5} will contribute to prove Theorem \ref{whole} and Theorem \ref{bounded} respectively.

\section{Some estimates from the energy inequality}\label{2}
\noindent{\bf The whole space case:}\\
Let us denote $E_\epsilon(t),\ E_\epsilon^0$ as
\begin{align}
E_\epsilon(t)=\int_{R^{\mathbb{N}}}\frac{1}{2}\rho_\epsilon|u_\epsilon|^2+\frac{\alpha}{2}|\nabla d_\epsilon|^2+\alpha F(d_\epsilon)+\frac{1}{\epsilon^2(\gamma-1)}[(\rho_\epsilon)^\gamma-\gamma(\rho_\epsilon-1)-1],\nonumber\\
E_\epsilon^0=\int_{R^{\mathbb{N}}}\frac{1}{2}\rho_\epsilon^0|u_\epsilon^0|^2+\frac{\alpha}{2}|\nabla d_\epsilon^0|^2+\alpha F(d_\epsilon^0)+\frac{1}{\epsilon^2(\gamma-1)}[(\rho_\epsilon^0)^\gamma-\gamma(\rho_\epsilon^0-1)-1].\nonumber
\end{align}
We can write the energy equation of system (\ref{com1})--(\ref{com3}) as follows
\begin{align}
E_\epsilon(t)+\int_0^T\int_{R^{\mathbb{N}}}\mu|\nabla u_\epsilon|^2+\xi|divu_\epsilon|^2+\frac{\alpha}{\lambda}|N_\epsilon|^2\leq
E_\epsilon^0\leq C.\label{energy}
\end{align}
From (\ref{energy}) we obtain
\begin{align}
&\|\sqrt{\rho_\epsilon}u_\epsilon\|_{L^2((0,T)\times R^{\mathbb{N}})}\leq C,\quad \|\nabla u_\epsilon\|_{L^2((0,T)\times R^{\mathbb{N}})}\leq C,\label{2.2}\\
&\sup_t\int_{R^{\mathbb{N}}}\frac{1}{\epsilon^2(\gamma-1)}[(\rho_\epsilon)^\gamma-\gamma(\rho_\epsilon-1)-1]\leq C,\label{2.3}\\
&\|\nabla d_\epsilon\|_{L^\infty((0,T);L^2(R^{\mathbb{N}}))}\leq C,\quad \|N_\epsilon\|_{L^2((0,T)\times R^{\mathbb{N}})}\leq C.\label{2.4}
\end{align}
Also we have
\begin{align}
x^\gamma-\gamma(x-1)-1\geq\nu_\delta|x-1|^\gamma,\ |x-1|\geq\delta,\ x\geq0,\ \mathrm{some}\ \nu_\delta>0.\label{2.5}
\end{align}
So by virtue of (\ref{2.3}) and (\ref{2.5}), we obtain
\begin{align}
\sup_t\int_{B_r}|\rho_\epsilon-1|^\gamma&\leq\sup_t\int_{B_r}|\rho_\epsilon-1|_{|\rho_\epsilon-1|\geq\delta}^\gamma+|\rho_\epsilon-1|_{|\rho_\epsilon-1|\leq\delta}^\gamma\nonumber\\
&\leq\sup_t\int_{B_r}\frac{\epsilon^2}{\nu_\delta}\frac{(\rho_\epsilon)^\gamma-\gamma(\rho_\epsilon-1)-1}{\epsilon^2}dx+|B_r|\delta\nonumber\\
&\leq\frac{\epsilon^2}{\nu_\delta}C+|B_r|\delta.\label{2.1}
\end{align}
Let $\epsilon\rightarrow0$, and then $\delta\rightarrow0$, yield $\rho_\epsilon\rightarrow1\ in\ L^\infty(0,T;L^\gamma({B_r}))$. We denote $\varphi_\epsilon=\frac{\rho_\epsilon-\overline{\rho_\epsilon}}{\epsilon}$. For all $x>0$, we have
\begin{align}
\left\{\begin{array}{ll}
&x^\gamma-1-\gamma(x-1)\geq\nu|x-1|^\gamma,\quad \gamma\geq2,\\
&x^\gamma-1-\gamma(x-1)\geq\nu|x-1|^2,\quad \gamma<2,\ |x-1|\leq \frac{1}{2},\\
&x^\gamma-1-\gamma(x-1)\geq\nu|x-1|^\gamma,\quad \gamma<2,\ |x-1|\geq \frac{1}{2}.
\end{array}\right.\label{2.6}
\end{align}
Using (\ref{2.3}) and (\ref{2.6}), we have
\begin{align}
\left\{\begin{array}{ll}
&\|\varphi_\epsilon\|_{L^\infty(0,T;L^\gamma(R^{\mathbb{N}}))}\leq C,\quad \gamma\geq 2,\\
&\|\varphi_\epsilon|_{|\rho_\epsilon-1|\leq\frac{1}{2}}\|_{L^\infty(0,T;L^2(R^{\mathbb{N}}))}\leq C,\quad \gamma< 2,\\
&\|\varphi_\epsilon|_{|\rho_\epsilon-1|\geq\frac{1}{2}}\|_{L^\infty(0,T;L^\gamma(R^{\mathbb{N}}))}\leq C\epsilon^{\frac{2}{\gamma}-1},\quad \gamma< 2.
\end{array}\right.\label{2.7}
\end{align}
Then, we obtain
\begin{align}
\varphi_\epsilon\in L^\infty(0,T;L^\gamma(R^{\mathbb{N}})),\quad \gamma\geq2,\label{2.8}\\
\varphi_\epsilon\in L^\infty(0,T;L^\gamma_2(R^{\mathbb{N}})),\quad \gamma<2,\label{2.9}
\end{align}
The above results also right to $\varphi_\epsilon|_{t=0}$. Next, we consider $u_\epsilon$.
Let split $u_\epsilon$ as follows:
\begin{align}
&u_\epsilon=u_\epsilon^1+u_\epsilon^2,\nonumber\\
&u_\epsilon^1=u_\epsilon|_{|\rho_\epsilon-1|\leq\frac{1}{2}},\quad u_\epsilon^2=u_\epsilon|_{|\rho_\epsilon-1|\geq\frac{1}{2}}.\nonumber
\end{align}
We have
\begin{align}
\sup_t\int_{R^{\mathbb{N}}}|u_\epsilon^1|^2dx\leq2\sup_t\int_{R^{\mathbb{N}}}\rho_\epsilon|u_\epsilon|^2dx\leq C\nonumber\\
\int_{R^{\mathbb{N}}}|u_\epsilon^2|^2dx\leq2\int_{R^{\mathbb{N}}}\epsilon|\varphi_\epsilon||u^2_\epsilon|^2dx\nonumber\\
\leq\epsilon\|\varphi_\epsilon\|_{L_2^\gamma(R^{\mathbb{N}})}\|u_\epsilon^2\|_{L^2(R^{\mathbb{N}})}^\theta\|\nabla u_\epsilon\|_{L^2(R^{\mathbb{N}})}^{1-\theta},\nonumber
\end{align}
where $\theta=0$ if $\mathbb{N}=2$ and $\theta=\frac{1}{2}-\frac{3}{4\gamma}$ if $\mathbb{N}=3$. We have deduced that
\begin{align}
\|u_\epsilon^1\|_{L^\infty(0,T;L^2(R^{\mathbb{N}}))}\leq C,\label{2.10}\\
\|u_\epsilon^2\|_{L^2((0,T)\times R^{\mathbb{N}})}\leq\epsilon^{\frac{1}{2-\theta}}C.\label{2.11}
\end{align}
So we have $\|u_\epsilon\|_{L^2(0,T;H_0^1(R^{\mathbb{N}}))}\leq C$. Let's turn to $d_\epsilon$.
Equation (\ref{com3}) multiplied with $d$ yields
\begin{align}
|d|_t^2-(u\cdot\nabla)|d^2|-\gamma_\epsilon\Delta|d|^2+\frac{\gamma_\epsilon}{{\zeta}^2}(|d|^2-1)|d|^2\geq0.\nonumber
\end{align}
That is
\begin{align}
(|d|^2-1)_t-(u\cdot\nabla)(|d|^2-1)-\gamma_\epsilon\Delta(|d|^2-1)+\frac{\gamma_\epsilon}{{\zeta}^2}(|d|^2-1)|d|^2\geq0.\nonumber
\end{align}
Noting that $u_\epsilon\in L^2(0,T;H_0^1(R^{\mathbb{N}}))$ and using maximum principle, we have
\begin{align}
|d_\epsilon|\leq|d_\epsilon^0|=1.\label{2.12}
\end{align}
Then, we obtain $d_\epsilon\in L^\infty(0,T;\mathcal{H}(R^{\mathbb{N}}))$.
Using $\Delta d_\epsilon-f(d_\epsilon)=\frac{1}{\lambda}N_\epsilon$, $\|f(d_\epsilon)\|_{L^2{((0,T)\times R^\mathbb{N})}}$ and (\ref{2.12}), we have
\begin{align}
 \|\nabla^2d_\epsilon\|_{L^2((0,T)\times R^\mathbb{N})}&\leq C(\|\Delta d_\epsilon\|_{L^2((0,T)\times R^\mathbb{N})}+\|d_\epsilon\|_{L^2((0,T)\times R^\mathbb{N})}+\|\nabla^2d_{\epsilon,0}\|_{L^2((0,T)\times R^\mathbb{N})})\nonumber\\
 &\leq C.\nonumber
\end{align}
Noticing $N_\epsilon=d_t+(u_\epsilon\cdot\nabla)d_\epsilon-\Omega_\epsilon d_\epsilon$ is bounded in $L^2((0,T)\times R^\mathbb{N})$, we have
\begin{align}
\|d_{\epsilon t}\|_{L^2(0,T;L^{\frac{3}{2}}(B_r))}\leq C(r).\nonumber
\end{align}
Thus, we have obtained
\begin{align}
&\rho_\epsilon\rightarrow1\qquad in\ L^\infty(0,T;L^\gamma(R^{\mathbb{N}})),\ if\ \gamma\geq2,\nonumber\\
&\rho_\epsilon\rightarrow1\qquad in\ L^\infty(0,T;L_2^\gamma(R^{\mathbb{N}})),\ if\ \gamma<2,\nonumber\\
&u_\epsilon\rightharpoonup u\qquad in\ L^2(0,T;H^1(R^{\mathbb{N}})),\nonumber\\
&d_\epsilon\rightharpoonup d\qquad in\ L^\infty(0,T;\mathcal{H}(R^{\mathbb{N}})),\nonumber\\
&d_\epsilon\rightarrow d\qquad in\ L^2(0,T;W^{1,t}(B_r)),\nonumber\\
&d_\epsilon\rightarrow d\qquad in\ C^0([0,T];L^t(B_r)),\nonumber
\end{align}
where $t\in\ [2,\infty)$ if $\mathbb{N}=2$ and $t<\frac{2\mathbb{N}}{\mathbb{N}-2}$ if $\mathbb{N}=3$.
So we can easily get the convergence of terms in equations (\ref{com1}) - (\ref{com3}) (see \cite{LH,f2,liu}), except $\rho_\epsilon u_\epsilon\otimes u_\epsilon$. Here, $\pi$ is the limit of the $\frac{\rho_\epsilon^\gamma-1}{\epsilon^2}+F(d_\epsilon)+\frac{1}{2}|\nabla d_\epsilon|^2$ in some sense.\\
{\bf The bounded domain case:}\\
The results we obtained in the whole space case also hold in the bounded domain with both $R^{\mathbb{N}}$ and $B_r$ replaced by $D$.
\par At the end of this section, we set
\begin{align}
Q=\nabla\Delta^{-1}div,\ and\ P=I-Q,\nonumber
\end{align}
where $\Delta^{-1}$ defined by fourier transform in the whole space $R^{\mathbb{N}}$ and by the followings in a bounded domain $D$ with Neummann boundary conditions:
\begin{align}
f=\Delta^{-1}g,\ if\ \Delta f=g,\ \left.{\frac{\partial f}{\partial n}}\right|_{\partial D}=0,\ \int_Df=0.\nonumber
\end{align}
So we can split $u_\epsilon=Pu_\epsilon+Qu_\epsilon$ and consider their convergence, respectively.
\section{The convergence of $Pu_\epsilon$}\label{3}
Using operator $P$ and $Q$, we can split equations (\ref{com2}) as
\begin{align}
&P(\rho_\epsilon u_\epsilon)_t+divP(\rho_\epsilon u_\epsilon\otimes u_\epsilon)-\mu\Delta Pu_\epsilon=\alpha P\nabla[-\nabla d_\epsilon\odot\nabla d_\epsilon+\frac{1}{2\lambda}(d_\epsilon\otimes N_\epsilon-N_\epsilon\otimes d_\epsilon)],\label{pcom1}\\
&(Q(\rho_\epsilon u_\epsilon))_t+divQ(\rho_\epsilon u_\epsilon\otimes u_\epsilon)-\xi\nabla divu_\epsilon-\mu\Delta Qu_\epsilon\nonumber\\
&\qquad+\frac{a}{{\epsilon}^2}\nabla((\rho_\epsilon)^\gamma-\gamma(\rho_\epsilon-1)-1)
+\frac{a\gamma}{{\epsilon}^2}\nabla(\rho_\epsilon-\overline{\rho_\epsilon})\nonumber\\
&\qquad=\alpha Q\nabla[(F(d_\epsilon)+\frac{1}{2}|\nabla d_\epsilon|^2)I-(\nabla d_\epsilon\odot\nabla d_\epsilon)+\frac{1}{2\lambda}(d_\epsilon\otimes N_\epsilon-N_\epsilon\otimes d_\epsilon)].\label{qcom2}
\end{align}
Equation (\ref{com1}) can be rewritten
\begin{align}
\epsilon(\varphi_\epsilon)_t+div(Q(\rho_\epsilon u_\epsilon))=0.\label{qcom1}
\end{align}
Here we need the following lemma, the proof can be find in \cite{lions2}({Lemma 5.1}).
\begin{lemma}\label{lemma2}
Let $g^k,\ h^k$ converge weakly to $g,\ h$ respectively in $L^{p_1}(0,T;L^{p_2}(D)),\ L^{q_1}(0,T;L^{q_2}(D))$, where $1\leq p_1,\ p_2\leq+\infty$,
\begin{align}
\frac{1}{p_1}+\frac{1}{q_1}=\frac{1}{p_2}+\frac{1}{q_2}=1,\nonumber
\end{align}
and in addition,
\begin{align}
&\|\frac{\partial g^k}{\partial t}\|_{L^1(0,T;W^{-m,1}(D))}\leq C,\ \ \mathrm{for\ some}\ m\geq0,\nonumber\\
&\|h^k-h^k(t,\cdot+\eta)\|_{L^{q_1}(0,T;L^{q_2}(D))}\rightarrow0,\ |\eta|\rightarrow0,\ \mathrm{uniformly\ for}\ k.
\end{align}
Then, $g^kh^k$ converges to $gh$ in the sense of distributions on $(0,T)\times D$. Here $D$ is bounded or unbounded.
\end{lemma}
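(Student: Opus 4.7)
The plan follows the classical compactness principle: the product of a sequence that is compact in time with one that is compact in space converges to the product of the limits. Here the bound on $\partial_t g^k$ in $L^1(0,T;W^{-m,1}(D))$ provides time compactness of $g^k$ after a mild spatial regularization, while the equicontinuity of translates of $h^k$ supplies spatial compactness of $h^k$. These two ingredients are interlaced through a spatial mollification argument and a triangle-inequality decomposition.

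\textbf{Setup and Aubin--Lions step.} Fix $\phi\in C_c^\infty((0,T)\times D)$, let $\omega_\delta$ be a standard spatial mollifier supported in the ball of radius $\delta$, and write $g^k_\delta=g^k\ast_x\omega_\delta$ and $g_\delta=g\ast_x\omega_\delta$. Then
\begin{equation*}
\int g^kh^k\phi-\int gh\phi=\int(g^k-g^k_\delta)h^k\phi+\Bigl[\int g^k_\delta h^k\phi-\int g_\delta h\phi\Bigr]+\int(g_\delta-g)h\phi.
\end{equation*}
For fixed $\delta>0$, the sequence $g^k_\delta$ is smooth in $x$ with derivatives uniformly bounded in $k$ by $C_\delta\|g^k\|_{L^{p_2}}$, and $\partial_t g^k_\delta=(\partial_t g^k)\ast_x\omega_\delta$ is bounded in $L^1(0,T;L^\infty_{\text{loc}}(D))$ because convolution against a fixed smooth compactly supported kernel converts $W^{-m,1}$ bounds into $L^\infty_{\text{loc}}$ bounds. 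An Aubin--Lions (or direct Arzel\`a--Ascoli) argument then gives $g^k_\delta\to g_\delta$ strongly in $C([0,T];L^r_{\text{loc}}(D))$ for every $r<\infty$. Combined with $h^k\rightharpoonup h$ in $L^{q_1}(0,T;L^{q_2}(D))$ and the H\"older pairing dictated by $1/p_i+1/q_i=1$, the bracketed term tends to zero as $k\to\infty$ for each fixed $\delta$.

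\textbf{Mollification errors.} The third term $\int(g_\delta-g)h\phi$ vanishes as $\delta\to0$ by the strong convergence $g_\delta\to g$ in $L^{p_1}(0,T;L^{p_2}(D))$. The first term is the crucial one, and it is handled through the translation hypothesis. Using self-adjointness of convolution,
\begin{equation*}
\int(g^k-g^k_\delta)h^k\phi\,dx=\iint g^k(x)\bigl[h^k(x)\phi(x)-h^k(x+y)\phi(x+y)\bigr]\omega_\delta(y)\,dy\,dx,
\end{equation*}
and the bracket splits as $[h^k(\cdot+y)-h^k(\cdot)]\phi(\cdot+y)+h^k(\cdot+y)[\phi(\cdot+y)-\phi(\cdot)]$. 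H\"older in $(t,x)$ then bounds the integral by
\begin{equation*}
\|g^k\|_{L^{p_1}L^{p_2}}\Bigl(\sup_{|y|\le\delta}\|h^k(\cdot+y)-h^k\|_{L^{q_1}L^{q_2}}\|\phi\|_\infty+\delta\|\nabla\phi\|_\infty\|h^k\|_{L^{q_1}L^{q_2}}\Bigr),
\end{equation*}
which goes to zero as $\delta\to0$ uniformly in $k$ by the assumed equicontinuity of translates. Sending first $k\to\infty$ and then $\delta\to0$ in the decomposition finishes the proof.

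\textbf{Main obstacle.} The delicate step is the time-compactness of $g^k_\delta$: one must verify that convolving a sequence bounded only in $L^1(0,T;W^{-m,1})$ against a fixed smooth mollifier produces a uniform bound in $L^1(0,T;L^\infty_{\text{loc}})$ that is strong enough to invoke Aubin--Lions. This is essentially an exercise in duality for Sobolev spaces of negative order, but must be carried out with care when $D$ is unbounded, where the compact support of the test function $\phi$ (together with a preliminary cutoff) is what confines all integrals to a fixed ball and makes the local compactness suffice.
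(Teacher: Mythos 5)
The paper does not actually prove this lemma: it invokes it by citing Lemma 5.1 of Lions \cite{lions2}, so there is no in-paper argument to compare against. Your proof is essentially the standard argument behind that cited result, and its structure is sound: mollify $g^k$ in space, use the time-derivative bound to upgrade $g^k_\delta$ to strong convergence and pair it with the weak convergence of $h^k$, and transfer the mollification error onto $h^k\phi$, where it is killed by the translation hypothesis; the adjoint-of-convolution identity and the ensuing H\"older estimate are correct. Two technical caveats. First, an $L^1(0,T;W^{-m,1})$ bound on $\partial_t g^k$ gives, after mollification, a uniform bound on the total variation of $t\mapsto g^k_\delta(t)$ in $L^\infty_{\mathrm{loc}}$, \emph{not} equicontinuity in time (think of $t\mapsto\psi(kt)$ for a fixed bump $\psi$, whose derivative is bounded in $L^1(0,T)$ uniformly in $k$ but which has no uniformly convergent subsequence); so you cannot conclude $g^k_\delta\to g_\delta$ in $C([0,T];L^r_{\mathrm{loc}})$. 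The correct tool is Simon's version of the Aubin--Lions lemma, which accepts a time derivative bounded only in $L^1(0,T;Y)$ and yields relative compactness in $L^{p}(0,T;L^{p_2}_{\mathrm{loc}})$ for finite $p$ --- and that is enough to pass to the limit in your bracketed term, with a separate remark needed for the endpoint $p_1=\infty$, $q_1=1$. Second, your disposal of $\int(g_\delta-g)h\phi$ uses $g_\delta\to g$ strongly in $L^{p_1}(L^{p_2})$, which fails when $p_2=\infty$ (a case the hypotheses allow); there one should instead move the mollifier onto $h\phi\in L^{q_1}(L^{q_2})$ with $q_2=1$, where mollification does converge. Neither point undermines the approach; both are repairable within your framework.
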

\noindent{\bf The whole space case:}\\
We consider $P(\rho_\epsilon u_\epsilon),\ Pu_\epsilon$. From (\ref{pcom1}) we obtain
\begin{align}
\partial_tP(\rho_\epsilon u_\epsilon)\ \mathrm{is\ bounded\ in}\ \ L^2(0,T;H^{-1}(B_r))+L^\infty(0,T;W^{-1,1}(B_r)).\nonumber
\end{align}
So we obtain
\begin{align}
\partial_tP(\rho_\epsilon u_\epsilon)\ \mathrm{is\ bounded\ in}\ L^2(0,T;H^{-s}(B_r)),\ s>1+\frac{\mathbb{N}}{2}.\nonumber
\end{align}
It is easy to obtain that
\begin{align}
&P(\rho_\epsilon u_\epsilon)\ \mathrm{is\ bounded\ in}\ L^2(0,T;L^s(B_r)),\nonumber\\
&Pu_\epsilon\ \mathrm{is\ bounded\ in}\ L^2(0,T;L^t(R^{\mathbb{N}})),\nonumber
\end{align}
where $s>\gamma,t>2$ if $\mathbb{N}=2$ and $s=\frac{6\gamma}{\gamma+6},t=6$ if $\mathbb{N}=3$.
Using $\frac{\gamma+6}{6\gamma}+\frac{1}{6}<1$ and Lemma \ref{lemma2}, we obtain $P(\rho_\epsilon u_\epsilon)\cdot Pu_\epsilon$ convergence to $|u|^2$ in $D'(B_r)$. Specially, we have
\begin{align}
\int_0^T\int_{B_r} P(\rho_\epsilon u_\epsilon)\cdot Pu_\epsilon=\int_0^T\int_{B_r}|u|^2.\nonumber
\end{align}
Noticing $\rho_\epsilon\rightarrow1$ in $L^\infty(0,T;L^\gamma(D))$, we obtain
\begin{align}
\int_0^T\int_{B_r}|Pu_\epsilon|^2=\int_0^T\int_{B_r} P(\rho_\epsilon u_\epsilon)\cdot Pu_\epsilon=\int_0^T\int_{B_r}|u|^2,\qquad \epsilon\rightarrow0,\nonumber
\end{align}
and
\begin{align}
\int_0^T\int_{B_r}|Pu_\epsilon-u|^2=\int_0^T\int_{B_r}|Pu_\epsilon|^2+|u|^2-2Pu_\epsilon\cdot u\rightarrow0,\qquad as\  \epsilon\rightarrow0.\nonumber
\end{align}
That is
\begin{align}
Pu_\epsilon\rightarrow u\ \in\ L^2((0,T)\times B_r).\label{3.1}
\end{align}
{\bf The bounded domain case:}\\
Using the same method, we can easily obtain
\begin{align}
Pu_\epsilon\rightarrow u\ \in\ L^2((0,T)\times D).\label{3.2}
\end{align}

\section{The proof of Theorem \ref{whole}}\label{four}
In this section, we show that $Qu_\epsilon\rightarrow0$ in $L^2(0,T;L^p(R^{\mathbb{N}})),\ \mathrm{for\ some}\ p\geq2$. Let
\begin{align}
G_\epsilon=&-Q(div(\rho_\epsilon u_\epsilon\otimes u_\epsilon))+\mu\Delta Qu_\epsilon+\xi\nabla divu_\epsilon-\frac{a}{{\epsilon}^2}\nabla((\rho_\epsilon)^\gamma-\gamma(\rho_\epsilon-1)-1)\nonumber\\
&+\alpha Q\nabla[(F(d_\epsilon)+\frac{1}{2}|\nabla d_\epsilon|^2)I-(\nabla d_\epsilon\odot\nabla d_\epsilon)+\frac{1}{2\lambda}(d_\epsilon\otimes N_\epsilon-N_\epsilon\otimes d_\epsilon)].\nonumber
\end{align}
Let $m_\epsilon=\rho_\epsilon u_\epsilon,\ B=a\gamma$ and $\phi^\epsilon=\binom{\varphi_\epsilon}{Q(\rho_\epsilon u_\epsilon)}$. We define operator $A$ as
\begin{align}
A\phi=\binom{divm}{B\nabla\varphi},\qquad\phi=\binom{\varphi}{m},\ \varphi\in \mathbb{R},\ m\in \mathbb{R}^\mathbb{N}.\label{4.1}
\end{align}
Equations (\ref{qcom2}) and (\ref{qcom1}) read as follows:
\begin{align}
\phi^\epsilon_t+\frac{A}{\epsilon}\phi^\epsilon=\binom{0}{G_\epsilon},\label{qcom12}
\end{align}
with $\left.\phi^\epsilon\right|_{t=0}=\phi_0^\epsilon=\binom{\varphi_\epsilon^0}{m_\epsilon^0}$. From the initial conditions, we have $\varphi_\epsilon^0\in L^\gamma_2(R^{\mathbb{N}})$ and $m_\epsilon^0\in L^{\frac{2\gamma}{\gamma+1}}$. Sobolev's embedding theorem yields $\phi_0^\epsilon\in H^{-1}(R^{\mathbb{N}})$. By virtue of (\ref{2.8})--(\ref{2.11}), we have
\begin{align}
\left\|\varphi_\epsilon u_\epsilon\right\|_{L^2(0,T;L^{\frac{\mathbb{N}}{\mathbb{N}-1}}(R^{\mathbb{N}})+L^{\frac{2\mathbb{N}\gamma}{\mathbb{N}\gamma-2\gamma+2\mathbb{N}}}(R^{\mathbb{N}}))}\leq C.\nonumber
\end{align}
Thus, we have deduced $\varphi_\epsilon u_\epsilon\in L^2((0,T);H^{-1}(R^{\mathbb{N}}))$. Next, we give the following lemma:
\begin{lemma}\label{lemma3}
Let $g,\ h\in C_0^\infty(R^{\mathbb{N}})$, $\chi\in C_0^\infty(R^{\mathbb{N}}),\ supp\chi\in B_1,\ \int_{R^{\mathbb{N}}}\chi dx=1$, and $\chi_\delta(x)=\delta^{-\mathbb{N}}\chi(\frac{x}{\delta})$. Then, we have
\begin{align}
&\left\|g-g*\chi_\delta\right\|_{L^p(R^{\mathbb{N}})}\leq C_p\delta^{1-\beta}\left\|\nabla g\right\|_{L^2(R^{\mathbb{N}})},\quad \beta=\mathbb{N}\left(\frac{1}{2}-\frac{1}{p}\right),\nonumber\\
&\left\|h*\chi_\delta\right\|_{L^{t_1}(R^{\mathbb{N}})}\leq C\delta^l\left\|h\right\|_{W^{-m,t_2}(R^{\mathbb{N}})},\quad l=-m-\mathbb{N}\left(1+\frac{1}{t_1}-\frac{1}{t_2}\right).
\end{align}
\end{lemma}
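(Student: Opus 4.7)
The plan is to establish the two estimates separately. For the first, I would combine an $L^2$ bound coming from the fundamental theorem of calculus with a Sobolev-based $L^{2^*}$ bound and then interpolate between them. For the second, I would unfold the definition of $W^{-m,t_2}$ as derivatives of $L^{t_2}$-functions, shift the derivatives onto the mollifier, and apply Young's inequality together with the exact scaling law of $\partial^\alpha\chi_\delta$.

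For the first estimate, since $\chi$ has integral one I would write
\begin{equation*}
g(x)-g*\chi_\delta(x)=\int_{\mathbb{R}^{\mathbb{N}}}\bigl(g(x)-g(x-y)\bigr)\chi_\delta(y)\,dy=\int_{\mathbb{R}^{\mathbb{N}}}\int_0^1 y\cdot\nabla g(x-sy)\,ds\,\chi_\delta(y)\,dy.
\end{equation*}
Minkowski and Fubini, together with $\mathrm{supp}\,\chi_\delta\subset B_\delta$, yield $\|g-g*\chi_\delta\|_{L^2}\leq C\delta\|\nabla g\|_{L^2}$; equivalently, this follows from Plancherel and $|1-\hat\chi(\delta\xi)|\leq C\delta|\xi|$. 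On the other hand, Young's inequality (using $\|\chi_\delta\|_{L^1}=1$) together with the Sobolev embedding $\dot{H}^1(\mathbb{R}^{\mathbb{N}})\hookrightarrow L^{2^*}(\mathbb{R}^{\mathbb{N}})$, where $2^*=\frac{2\mathbb{N}}{\mathbb{N}-2}$, give $\|g-g*\chi_\delta\|_{L^{2^*}}\leq 2\|g\|_{L^{2^*}}\leq C\|\nabla g\|_{L^2}$. Interpolating between $L^2$ and $L^{2^*}$ at the exponent $\theta$ defined by $\frac{1}{p}=\frac{\theta}{2}+\frac{1-\theta}{2^*}$ gives $\theta=1-\mathbb{N}\bigl(\frac12-\frac1p\bigr)=1-\beta$, which is precisely the claimed power of $\delta$. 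For $\mathbb{N}=2$ one uses instead $\dot{H}^1\hookrightarrow L^q$ for every $q<\infty$ and interpolates in the same manner.

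For the second estimate, I would fix a representation $h=\sum_{|\alpha|\leq m}\partial^\alpha f_\alpha$ with $\sum_\alpha\|f_\alpha\|_{L^{t_2}}\leq C\|h\|_{W^{-m,t_2}}$. Transferring the derivatives onto the kernel,
\begin{equation*}
h*\chi_\delta=\sum_{|\alpha|\leq m}f_\alpha*\partial^\alpha\chi_\delta,\qquad \partial^\alpha\chi_\delta(x)=\delta^{-\mathbb{N}-|\alpha|}(\partial^\alpha\chi)(x/\delta).
\end{equation*}
A change of variables gives $\|\partial^\alpha\chi_\delta\|_{L^q}=\delta^{-|\alpha|-\mathbb{N}(1-1/q)}\|\partial^\alpha\chi\|_{L^q}$, and Young's inequality with $q$ determined by $1+\frac{1}{t_1}=\frac{1}{t_2}+\frac{1}{q}$ yields $\|f_\alpha*\partial^\alpha\chi_\delta\|_{L^{t_1}}\leq C\delta^{-|\alpha|-\mathbb{N}(1-1/q)}\|f_\alpha\|_{L^{t_2}}$. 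Summing over $|\alpha|\leq m$ and selecting the worst (most negative) exponent at $|\alpha|=m$ delivers the stated scaling in $\delta$, with $l$ matching the exponent produced by this tracking.

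The arguments are essentially routine; the only real care lies in the bookkeeping, namely correctly identifying the interpolation exponent with $1-\beta$ in the first estimate, and correctly combining the Young exponent $1/q$ with the homogeneity of $\partial^\alpha\chi_\delta$ in the second. Neither step presents a genuine analytic obstacle once the atomic decomposition of $W^{-m,t_2}$ and the scaling of mollifier derivatives are at hand.
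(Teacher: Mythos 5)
Your argument is correct, and for the first estimate it is the same as the paper's: an $L^2$ bound of order $\delta$ from the difference quotient (the paper writes it as $\|\int(g(x)-g(x-\delta z))\chi(z)dz\|_{L^2}\leq C\delta\|\nabla g\|_{L^2}$), an $L^{2\mathbb{N}/(\mathbb{N}-2)}$ bound from Sobolev embedding, and interpolation to land on the exponent $1-\beta$. For the second estimate you take a genuinely different, more elementary route: the paper works on the Fourier side, writing $h*\chi_\delta=\mathcal{F}^{-1}(\hat h\hat\chi_\delta)$, splitting off $(-\Delta)^{-m/2}h$ and $(-\Delta)^{m/2}\chi_\delta$, and invoking potential theory for $\|(-\Delta)^{m/2}\chi_\delta\|_{L^{t_3}}\leq C\delta^{-m-\mathbb{N}(1-1/t_3)}$ and the equivalence $\|(-\Delta)^{-m/2}h\|_{L^{t_2}}\sim\|h\|_{\dot W^{-m,t_2}}$; you instead use the concrete representation $h=\sum_{|\alpha|\leq m}\partial^\alpha f_\alpha$, shift derivatives onto the mollifier, and apply Young with the scaling of $\partial^\alpha\chi_\delta$. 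Your version avoids the $L^{t_2}$-boundedness issues hidden in the paper's ``$\sim$'' (Riesz potentials of non-integer order, Mikhlin-type multipliers), at the mild cost of depending on the chosen representation of $h$; both are standard and give the same power of $\delta$. Two small flags. First, the exponent you actually derive is $-m-\mathbb{N}(1-1/q)$ with $1/q=1+1/t_1-1/t_2$, i.e.\ $-m-\mathbb{N}(1/t_2-1/t_1)$; this coincides with what the paper's own proof produces but \emph{not} with the printed $l=-m-\mathbb{N}(1+1/t_1-1/t_2)$, so the discrepancy is a typo in the lemma statement rather than an error on your part --- just do not assert that your exponent ``matches the stated'' $l$ without noting this. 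Second, for $\mathbb{N}=2$ the homogeneous embedding $\dot H^1(\mathbb{R}^2)\hookrightarrow L^q$ you invoke is false as stated; the correct repair is Gagliardo--Nirenberg applied to $g-g*\chi_\delta$ itself, $\|g-g*\chi_\delta\|_{L^p}\leq C\|g-g*\chi_\delta\|_{L^2}^{2/p}\|\nabla(g-g*\chi_\delta)\|_{L^2}^{1-2/p}\leq C\delta^{2/p}\|\nabla g\|_{L^2}$, which again gives the exponent $1-\beta$.
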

\begin{proof}
By embedding theorem, we have
\begin{align}
\left\|g-g*\chi_\delta\right\|_{L^{\frac{2\mathbb{N}}{\mathbb{N}-2}}(R^{\mathbb{N}})}\leq C\left\|\nabla g\right\|_{L^2(R^{\mathbb{N}})}.\nonumber
\end{align}
And it is easy to deduce that
\begin{align}
\left\|\int_{R^{\mathbb{N}}}(g(x)-g(x-y))\chi_\delta(y)dy\right\|_{L^2(R^{\mathbb{N}})}&=\left\|\int_{R^{\mathbb{N}}}\frac{g(x)-g(x-\delta z)}{\delta z}\delta z\chi(z)dz\right\|_{L^2(R^{\mathbb{N}})}\nonumber\\
&\leq C\delta\left\|\nabla g\right\|_{L^2(R^{\mathbb{N}})}.\nonumber
\end{align}
Using the interpolation theorem, we obtain
\begin{align}
\left\|g-g*\chi_\delta\right\|_{L^s(R^{\mathbb{N}})}\leq C\delta^{1-\beta}\left\|\nabla g\right\|_{L^2(R^{\mathbb{N}})}.\nonumber
\end{align}
where $2\leq s<\infty$ if $\mathbb{N}=2$ and $2\leq s\leq6$ if $\mathbb{N}=3$. By virtue of the following equation
\begin{align}
\left\|h*\chi_\delta\right\|_{L^{t_1}(R^{\mathbb{N}})}=\left\|\mathcal{F}^{-1}\mathcal{F}h*\chi_\delta\right\|_{L^{t_1}(R^{\mathbb{N}})}=
\left\|\mathcal{F}^{-1}\hat{h}\hat{\chi_\delta}\right\|_{L^{t_1}(R^{\mathbb{N}})},\nonumber
\end{align}
we have
\begin{align}
\left\|h*\chi_\delta\right\|_{L^{t_1}(R^{\mathbb{N}})}\leq\left\|(-\Delta)^{-\frac{m}{2}}h\right\|_{L^{t_2}(R^{\mathbb{N}})}
\left\|(-\Delta)^{\frac{m}{2}}\chi_\delta\right\|_{L^{t_3}(R^{\mathbb{N}})},\nonumber
\end{align}
where $1+\frac{1}{t_1}=\frac{1}{t_2}+\frac{1}{t_3}$.\\Using potential theory, we obtain
\begin{align}
\left\|(-\Delta)^{\frac{m}{2}}\chi_\delta\right\|_{L^{t_3}(R^{\mathbb{N}})}\leq C\delta^{-m-\mathbb{N}(1-\frac{1}{t_3})},\nonumber\\
\left\|(-\Delta)^{-\frac{m}{2}}h\right\|_{L^{t_2}(R^{\mathbb{N}})}\sim\left\|h\right\|_{\dot{W}^{-m,t_2}(R^{\mathbb{N}})},\nonumber
\end{align}
which end the proof.
\end{proof}
Let us define $\mathcal{L}(t),\ t\in \mathbb{R}$ by $e^{t A}$, where $A$ is defined in (\ref{4.1}). Then $\mathcal{L}(t)\phi_0$ solve
\begin{align}
\phi_t+A\phi=0,\
\left.\phi\right|_{t=0}=\phi_0.\nonumber
\end{align}
By Strichartz's estimate, we have
\begin{align}
\left\|\mathcal{L}(t)\phi_0\right\|_{L^{p}(R;\dot{H}_q^{s-\sigma}(R^{\mathbb{N}})}\leq\left\|\phi_0\right\|_{\dot{H}^s(R^{\mathbb{N}})},\label{4.4}
\end{align}
where $s\in \mathbb{R},\ 2\leq q<\infty,\ 2\leq p<\infty$ and $\frac{2\sigma}{\mathbb{N}+1}=\frac{2}{(\mathbb{N}-1)p}=\frac{1}{2}-\frac{1}{q}$.
The following estimates come from \cite{DG}(lemma 3.2).
\begin{align}
\left\|\mathcal{L}(\frac{t}{\epsilon})\phi_0\right\|_{L^{p}(R;W^{s-\sigma,q}(R^{\mathbb{N}}))}&\leq\epsilon^{\frac{1}{p}}\left\|\phi_0\right\|_{H^s(R^{\mathbb{N}})},\label{4.5}\\
\left\|\int_0^t\mathcal{L}(\frac{t-s}{\epsilon})\phi(s)ds\right\|_{L^{p}((0,T);W^{s-\sigma,q}(R^{\mathbb{N}}))}&\leq C(1+T)\epsilon^{\frac{1}{p}}\left\|\phi\right\|_{L^p(0,T;H^s(R^{\mathbb{N}}))}.\label{4.6}
\end{align}
Using semi-group theory, one can solve (\ref{qcom12}) by $\mathcal{L}(t)$,
\begin{align}
\phi_\epsilon(t)=\mathcal{L}(\frac{t}{\epsilon})\phi_0^\epsilon+\int_0^t\mathcal{L}(\frac{t-s}{\epsilon})G_\epsilon(s)ds.\label{4.7}
\end{align}
Naturally, One can obtain
\begin{align}
\left|Qu_\epsilon\right|\leq\left|Qu_\epsilon-Qu_\epsilon*\chi_\delta\right|+\epsilon\left|Q\varphi_\epsilon u_\epsilon*\chi_\delta\right|+\left|Qm_\epsilon*\chi_\delta\right|.\label{4.8}
\end{align}
By virtue of Lemma (\ref{lemma3}), we have
\begin{align}
&\left\|Qu_\epsilon-Qu_\epsilon*\chi_\delta\right\|_{L^2(0,T;L^p(R^{\mathbb{N}}))}\leq C\delta^{1-\mathbb{N}(\frac{1}{2}-\frac{1}{p})}\left\|\nabla u_\epsilon\right\|_{L^2((0,T)\times R^{\mathbb{N}})},\label{4.9}\\
&\epsilon\left\|Q\varphi_\epsilon u_\epsilon*\chi_\delta\right\|_{L^2(0,T;L^p(R^{\mathbb{N}}))}\leq\epsilon\delta^{-1-\mathbb{N}(\frac{1}{2}-\frac{1}{p})}
\left\|\varphi_\epsilon u_\epsilon\right\|_{L^2(0,T;H^{-1}(R^{\mathbb{N}}))}.\label{4.10}
\end{align}
So it only need to estimate $\left|Qm_\epsilon*\chi_\delta\right|$. Noticing
$$G_\epsilon\in L^\infty(0,T;W^{-1,1}(R^{\mathbb{N}}))\\+L^2(0,T;H^{-1}(R^{\mathbb{N}}))+L^2((0,T)\times R^{\mathbb{N}}),$$
 we have $G_\epsilon\in L^2(0,T;H^{-s}(R^{\mathbb{N}})),\ s>3$.
It is enough for us to obtain
\begin{align}
&\epsilon\left\|Qm_\epsilon*\chi_\delta\right\|_{L^2(0,T;L^p(R^{\mathbb{N}}))}\nonumber\\
&\leq C\delta^{-1-\sigma}\left\|\mathcal{L}(\frac{t}{\epsilon})\phi_0^\epsilon\right\|_{L^q(0,T;W^{-1-\sigma,p}(R^{\mathbb{N}}))}\nonumber\\
&\quad+C\delta^{-s-\sigma}\left\|\int_0^T\mathcal{L}(\frac{t-s}{\epsilon})QG_\epsilon(s)ds\right\|_{L^q(0,T;W^{-s-\sigma,p}(R^{\mathbb{N}}))}\nonumber\\
&\leq C\delta^{-1-\sigma}\epsilon^{\frac{1}{q}}\left\|\phi_0^\epsilon\right\|_{H^{-1}(R^{\mathbb{N}})}+C\delta^{-s-\sigma}\epsilon^{\frac{1}{q}}
\left\|G_\epsilon\right\|_{L^2(0,T;H^{-s}(R^{\mathbb{N}}))}\nonumber\\
&\leq C\delta^{-s-\sigma}\epsilon^{\frac{1}{q}}.\label{4.11}
\end{align}
Choosing $\epsilon=\delta^{q(\frac{3}{2}+s)-1}$ and substituting (\ref{4.9})-(\ref{4.11}) into (\ref{4.8}), we have proved
$$Qu_\epsilon\rightarrow0\ in\ L^2(0,T;L^p(R^{\mathbb{N}})),\ \epsilon\rightarrow0,\ p\geq2,$$
and then
\begin{align}
Pu_\epsilon\rightharpoonup u\ in\ L^2((0,T)\times R^{\mathbb{N}}).\label{4.12}
\end{align}
(\ref{3.1}) and (\ref{4.12}) yield
\begin{align}
Pu_\epsilon\rightarrow u\ in\ L^2((0,T)\times R^{\mathbb{N}}).\label{4.13}
\end{align}

\section{The proof of Theorem \ref{bounded}}\label{5}
In this section, $D$ denotes to a bounded domain of $\mathbb{R}^{\mathbb{N}}$. Let $\lambda_{k,0}^2(\lambda_{k,0}>0),\ \varphi_{k,0}$ be the eigenvalues and eigenvectors of the the Laplace operator $-\Delta$ with homogenenous Neumann boundary conditions:
\begin{align}
-\Delta\varphi_{k,0}=\lambda_{k,0}^2\varphi_{k,0},\quad \left.\frac{\partial\varphi_{k,0}}{\partial n}\right|_{\partial D}=0.
\end{align}
Here if $\lambda_{k_1,0}=\lambda_{k_2,0}$, we can choose $\{\varphi_{k,0}\}$ satisfying the followings
\begin{align}
\int_{\partial D}\nabla \varphi_{k_1,0}\nabla\varphi_{k_2,0}dx=0.
\end{align}
The operator $A$ is defined by (\ref{4.1}). We denote $\pm i\lambda_{k,0}$, $\varphi_{k,0}^\pm$ to be the eigenvalues and eigenvectors of $A$ respectively.
\begin{align}
\phi_{k,0}^\pm=\binom{\varphi_{k,0}}{m_{k,0}^\pm}=\binom{\varphi_{k,0}}{\pm\frac{\nabla\varphi_{k,0}}{i\lambda_{k,0}}}.
\end{align}
Let's define an operator $A_\epsilon$ as follows:
\begin{align}
A_\epsilon\binom{\varphi}{m}=\binom{divm}{\nabla\varphi}+\epsilon\binom{0}{\mu\Delta m+\xi\nabla divm}.\label{5.3}
\end{align}
Using the operator $A_\epsilon$, (\ref{qcom2}) and (\ref{qcom1}) can be rewritten as
\begin{align}
{\binom{\varphi_\epsilon}{Qm_\epsilon}}_t-\frac{A_\epsilon^*}{\epsilon}\binom{\varphi_\epsilon}{Qm_\epsilon}=\binom{0}{QM_\epsilon}.\label{5.14}
\end{align}
Here $A_\epsilon^*$ is the adjoint of $A_\epsilon$ and
\begin{align}
M_\epsilon=&-Q(div(\rho_\epsilon u_\epsilon\otimes u_\epsilon))+\epsilon(\mu+\xi)\nabla div(\varphi_\epsilon u_\epsilon)-\frac{a}{{\epsilon}^2}\nabla((\rho_\epsilon)^\gamma-\gamma(\rho_\epsilon-1)-1)\nonumber\\
&+\alpha Q\nabla[(F(d_\epsilon)+\frac{1}{2}|\nabla d_\epsilon|^2)-(\nabla d_\epsilon\odot\nabla d_\epsilon)+\frac{1}{2\lambda}(d_\epsilon\otimes N_\epsilon-N_\epsilon\otimes d_\epsilon)].\nonumber
\end{align}
The following lemma is another version corresponding to \cite{bepn}.
\begin{lemma}\label{lemma5}
There exists approximate eigenvalues $i\lambda_{k,\epsilon,n}^\pm$ and eigenvectors $\phi_{k,\epsilon,n}^\pm=\binom{\varphi_{k,\epsilon,n}^\pm}{m_{k,\epsilon,n}^\pm}$ of $A_\epsilon$, such that
\begin{align}
&A_\epsilon\phi_{k,\epsilon,n}^\pm=i\lambda_{k,\epsilon,n}^\pm\phi_{k,\epsilon,n}^\pm+R_{k,\epsilon,n}^\pm.\label{5.22}\\
&i\lambda_{k,\epsilon,n}^\pm=\pm i\lambda_{k,0}+i\lambda_{k,1}^\pm\sqrt{\epsilon}+O(\epsilon),\quad Re(i\lambda_{k,1}^\pm)\leq0\label{5.23}
\end{align}
For any $p\geq1$, we have
\begin{align}
\left\|R_{k,\epsilon,n}^\pm\right\|_{L^p(D)}\leq C_P(\sqrt{\epsilon})^{n+\frac{1}{p}}\ and\ \left\|\phi_{k,\epsilon,n}^\pm-\phi_{k,0}^\pm\right\|_{L^p(D)}\leq C_P(\sqrt{\epsilon})^{\frac{1}{p}}.\label{5.21}
\end{align}
\end{lemma}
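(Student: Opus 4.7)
The plan is a WKB/matched-asymptotics construction of approximate eigenpairs for $A_\epsilon$, combining a regular interior expansion with a boundary-layer corrector whose natural scale is $\sqrt{\epsilon}$. The reason the expansion proceeds in powers of $\sqrt{\epsilon}$, rather than of $\epsilon$, is that the exact eigenvector $\phi_{k,0}^\pm$ of $A$ satisfies the correct normal condition $m\cdot n=0$ on $\partial D$ (because $\partial_n\varphi_{k,0}=0$), but its tangential momentum component does not vanish there, whereas the viscous perturbation $\epsilon(\mu\Delta m+\xi\nabla\mathrm{div}\,m)$ inside $A_\epsilon$ demands a Dirichlet-type layer of width $\sqrt{\epsilon/\mu}$.

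First I would write, in a tubular neighborhood of $\partial D$ with normal distance $d(x)$ and tangential coordinate $x'$,
\begin{align}
\phi_{k,\epsilon,n}^\pm &= \sum_{j=0}^{n}(\sqrt{\epsilon})^j\Bigl[\Phi_j^\pm(x)+\chi(d)\,\Psi_j^\pm\!\bigl(d/\sqrt{\epsilon},x'\bigr)\Bigr],\nonumber\\
i\lambda_{k,\epsilon,n}^\pm &= \pm i\lambda_{k,0}+\sum_{j=1}^{n}(\sqrt{\epsilon})^j\,i\lambda_{k,j}^\pm,\nonumber
\end{align}
with $\chi$ a smooth cutoff supported near $\partial D$ and the profiles $\Psi_j^\pm(y,x')$ required to decay exponentially as $y\to\infty$. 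Inserting into $A_\epsilon\phi=i\lambda\phi$, rescaling normal derivatives of $\Psi_j$ to the fast variable $y=d/\sqrt{\epsilon}$, and separating the interior and fast parts order by order, one obtains at order $\epsilon^0$ in the interior $\Phi_0^\pm=\phi_{k,0}^\pm$; at leading order in the layer the tangential $m$-component satisfies the constant-coefficient ODE $\mu\partial_y^2\Psi_0^{\pm,m,\tau}=\pm i\lambda_{k,0}\,\Psi_0^{\pm,m,\tau}$ with datum $\Psi_0^{\pm,m,\tau}(0,x')=-m_{k,0}^{\pm,\tau}(x')$ and exponential decay at infinity, whose solution is $\Psi_0^{\pm,m,\tau}(y,x')=-m_{k,0}^{\pm,\tau}(x')e^{-\alpha_\pm y}$ with $\mathrm{Re}\,\alpha_\pm>0$; the normal component is then fixed by near-incompressibility at the next order, and the $\varphi$-component of the layer can be taken zero at leading order.

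At order $\sqrt{\epsilon}$ the interior correction $\Phi_1^\pm$ satisfies the Fredholm equation $(A\mp i\lambda_{k,0})\Phi_1^\pm=F_1-i\lambda_{k,1}^\pm\phi_{k,0}^\pm$, with $F_1$ built from the already constructed layer profile. Projecting onto $\ker(A\mp i\lambda_{k,0})$ fixes $i\lambda_{k,1}^\pm$; an explicit but tedious computation (integration by parts in the layer, using the exponential decay) shows that $i\lambda_{k,1}^\pm$ is proportional to $-\alpha_\pm\int_{\partial D}|\nabla_{\mathrm{tan}}\varphi_{k,0}|^2\,dS$, and therefore $\mathrm{Re}(i\lambda_{k,1}^\pm)\leq 0$ because $\mathrm{Re}\,\alpha_\pm>0$, which is (\ref{5.23}). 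Iterating, for each $j\leq n$ the boundary-layer equation for $\Psi_j^\pm$ is a linear ODE in $y$ with a source supplied by previous profiles and admits an exponentially decaying solution by variation of constants, while $\Phi_j^\pm$ is obtained by Fredholm after choosing $\lambda_{k,j}^\pm$ to kill the projection of the right-hand side onto $\phi_{k,0}^\pm$.

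For the remainder estimate, the residual $R_{k,\epsilon,n}^\pm=A_\epsilon\phi_{k,\epsilon,n}^\pm-i\lambda_{k,\epsilon,n}^\pm\phi_{k,\epsilon,n}^\pm$ decomposes into interior contributions of order $(\sqrt{\epsilon})^{n+1}$, bounded in $L^p(D)$ trivially, plus commutator terms $[A_\epsilon,\chi]\Psi_j^\pm$ and truncation errors supported in a strip of width $O(\sqrt{\epsilon})$ around $\partial D$. Since any profile of the form $e^{-c\,d/\sqrt{\epsilon}}$ has $L^p(D)$ norm of order $(\sqrt{\epsilon})^{1/p}$, every boundary-layer term picks up an additional factor $(\sqrt{\epsilon})^{1/p}$, producing exactly $\|R_{k,\epsilon,n}^\pm\|_{L^p(D)}\leq C_p(\sqrt{\epsilon})^{n+1/p}$. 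The same volume scaling applied to the difference $\phi_{k,\epsilon,n}^\pm-\phi_{k,0}^\pm$, whose leading contribution is the layer $\chi\Psi_0^\pm$, yields the second estimate in (\ref{5.21}). The main obstacle I expect is the bookkeeping required to keep every layer profile exponentially decaying at all orders \emph{and} to identify the correct solvability conditions that simultaneously determine each $\lambda_{k,j}^\pm$ and produce the sign condition $\mathrm{Re}(i\lambda_{k,1}^\pm)\leq 0$; the interaction between the normal and tangential parts of $\Psi_j^m$ through the $(\mu+\xi)\nabla\mathrm{div}$ term is the delicate algebraic point, and the possible multiplicity of $\lambda_{k,0}$ (handled by the orthogonalization noted in (5.2)) must be respected when inverting $A-(\pm i\lambda_{k,0})$.
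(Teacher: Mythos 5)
Your proposal is correct and follows essentially the same route as the paper: a matched-asymptotics ansatz in powers of $\sqrt{\epsilon}$ combining interior terms with exponentially decaying boundary-layer profiles in the fast variable $d(x)/\sqrt{\epsilon}$, with the leading layer ODE $\mu\partial_\zeta^2 m=\pm i\lambda_{k,0}m$, the solvability (Fredholm) condition at order $\sqrt{\epsilon}$ yielding $i\lambda_{k,1}^\pm\propto-(1\pm i)\int_{\partial D}|\nabla\varphi_{k,0}|^2$ and hence $\mathrm{Re}(i\lambda_{k,1}^\pm)\leq0$, and the $(\sqrt{\epsilon})^{1/p}$ factors in (\ref{5.21}) coming from the $L^p$ volume scaling of the layer. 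The only cosmetic differences are your use of an explicit cutoff and tangential/normal splitting where the paper works with a regularized distance $L(x)$ and the conditions $\partial_\zeta m_{k,0}^{\pm,b}\cdot\nabla L=0$, $\varphi_{k,0}^{\pm,b}=0$.
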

\begin{proof}
First (\ref{5.21}) can be deduced directly from (\ref{5.22}) and (\ref{5.23}). So it only needs to prove (\ref{5.22}) and (\ref{5.23}). We build $\phi_{k,\epsilon,n}^\pm$ in terms of $\phi_{k,0}^\pm$. We make for $\phi_{k,\epsilon,n}^\pm,\ \lambda_{k,\epsilon,n}^\pm$ in the following form:
\begin{align}
&\phi_{k,\epsilon,n}^\pm=\sum_{i=0}^n(\sqrt{\epsilon}^i\phi_{k,i}^{\pm,int}(x)+\sqrt{\epsilon}^i\phi_{k,i}^{\pm,b}(x,\frac{d(x)}{\sqrt{\epsilon}})),\label{5.4}\\
&\lambda_{k,\epsilon,n}^\pm=\sum_{i=0}^n\sqrt{\epsilon}^i\lambda_{k,i}^\pm,\label{5.5}
\end{align}
satisfying
\begin{align}
&\phi_{k,i}^{\pm,int}=\binom{\varphi_{k,i}^{\pm,int}}{m_{k,i}^{\pm,int}},\ \phi_{k,i}^{\pm,b}=\binom{\varphi_{k,i}^{\pm,b}}{m_{k,i}^{\pm,b}},\nonumber\\
&m_{k,i}^{\pm,int}+m_{k,i}^{\pm,b}=0\quad on\ \partial D.\nonumber
\end{align}
where $\phi_{k,i}^{\pm,b}$ repidly decreases to $0$ in $\zeta$ variable which defined by $\zeta=\frac{L(x)}{\sqrt{\epsilon}}$. Here $L(x)$ is a regularized distance function to the boundary $D$, which satisfies
\begin{align}
L(x)>0\ in\ D,\quad \left.L(x)\right|_{\partial D}=0,\ \left.\nabla L(x)\right|_{\partial D}=n.\nonumber
\end{align}
Setting $\phi_{k,0}^{\pm,int}=\phi_{k,0}^{\pm},\ \lambda_{k,0}^\pm=\lambda_{k,0}$ and using (\ref{5.3}), (\ref{5.4}) and (\ref{5.5}), we have
\begin{align}
&A_\epsilon\phi_{k,\epsilon,n}^\pm=A\phi_{k,0}^\pm+\epsilon\binom{0}{\mu\Delta_x m_{k,0}^\pm+\xi\nabla_x div_xm_{k,0}^\pm}+\binom{div_xm_{k,0}^{\pm,b}+\partial\zeta m_{k,0}^{\pm,b}\cdot\frac{\nabla_x L(x)}{\sqrt{\epsilon}}}{\nabla_x\varphi_{k,0}^{\pm,b}+\partial_\zeta\varphi_{k,0}^{\pm,b}\frac{\nabla_x L(x)}{\sqrt{\epsilon}}}\nonumber\\
&+\epsilon\mu\binom{0}{\Delta_xm_{k,0}^{\pm,b}+\frac{2}{\sqrt{\epsilon}}\partial_\zeta(\nabla_xm_{k,0}^{\pm,b})\nabla_x L(x)+\frac{1}{\epsilon}\partial_\zeta^2m_{k,0}^{\pm,b}|\nabla_x L(x)|^2+\frac{1}{\sqrt{\epsilon}}\partial_\zeta m_{k,0}^{\pm,b}\Delta_x L(x)}\nonumber\\
&+\epsilon\xi\binom{0}{\nabla_xdiv_xm_{k,0}^{\pm,b}+\partial_\zeta(divm_{k,0}^{\pm,b})\frac{\nabla_x L(x)}{\sqrt{\epsilon}}+\partial_\zeta(\nabla m_{k,0}^{\pm,b})^T\frac{\nabla L(x)}{\sqrt{\epsilon}}}\nonumber\\
&+\epsilon\xi\binom{0}{(\partial_\zeta^2m_{k,0}^{\pm,b}\cdot\nabla L(x))\frac{\nabla L(x)}{\epsilon}+\partial_\zeta m_{k,0}^{\pm,b}\frac{\nabla(\nabla L(x))}{\sqrt{\epsilon}}}\nonumber\\
&+A\sum_{i=0}^n\left(\sqrt{\epsilon}^i\phi_{k,i}^{\pm,int}(x)+\sqrt{\epsilon}^i\phi_{k,i}^{\pm,b}(x,\frac{d(x)}{\sqrt{\epsilon}})\right)\nonumber\\
&+\epsilon\xi\binom{0}{\nabla div\sum_{i=1}^n\left(\sqrt{\epsilon}^im_{k,0}^{\pm,int}+\sqrt{\epsilon}^im_{k,0}^{\pm,b}\right)}+
\epsilon\mu\binom{0}{\Delta\sum_{i=1}^n\left(\sqrt{\epsilon}^im_{k,0}^{\pm,int}+\sqrt{\epsilon}^im_{k,0}^{\pm,b}\right)}\label{5.6}
\end{align}
and
\begin{align}
i\lambda_{k,\epsilon,n}^\pm\phi_{k,\epsilon,n}^\pm&=i\lambda_{k,0}^\pm\left(\phi_{k,0}^\pm+\phi_{k,0}^{\pm,b}+
\Sigma_{i=1}^n\left(\sqrt{\epsilon}^i\phi_{k,i}^{\pm,int}+\sqrt{\epsilon}^i\phi_{k,i}^{\pm,b}\right)\right)\nonumber\\
&+i\left(\Sigma_{i=1}^n\sqrt{\epsilon}\lambda_{k,i}^\pm\right)\left(\phi_{k,0}^\pm+\phi_{k,0}^{\pm,b}+
\Sigma_{i=1}^n\left(\sqrt{\epsilon}^i\phi_{k,i}^{\pm,int}+\sqrt{\epsilon}^i\phi_{k,i}^{\pm,b}\right)\right).\label{5.7}
\end{align}
Using (\ref{5.6}) and (\ref{5.7}), order $\sqrt{\epsilon}^{-1}$ in the equation (\ref{5.22}) yields
\begin{align}
\partial_\zeta m_{k,0}^{\pm,b}\cdot\nabla L(x)=0,\ and\ \partial_\zeta\varphi_{k,0}^{\pm,b}\nabla L(x)=0.\label{5.8}
\end{align}
So we have $ m_{k,0}^{\pm,b}\cdot\nabla L(x)=0,\ \varphi_{k,0}^{\pm,b}=0$. Order $\sqrt{\epsilon}^{0}$ gives
\begin{align}
&\partial_\zeta\varphi_{k,1}^{\pm,b}\nabla L(x)+\mu\partial_\zeta^2m_{k,0}^{\pm,b}|\nabla L(x)|^2+(\mu+\xi)(\partial_\zeta^2m_{k,0}^{\pm,b}\cdot\nabla L(x))\nabla L(x)=i\lambda_{k,0}^{\pm,b}m_{k,0}^{\pm,b},\label{5.9}\\
&\partial_\zeta m_{k,1}^{\pm,b}\cdot\nabla L(x)+div_xm_{k,0}^{\pm,b}=0.\label{5.10}
\end{align}
Taking scalar product of (\ref{5.9}) with $\nabla L(x)$, we obtain
\begin{align}
\partial_\zeta\varphi_{k,1}^{\pm,b}=0. \nonumber
\end{align}
Then, we have
\begin{align}
\varphi_{k,1}^{\pm,b}=0,\quad
\mu|\nabla L(x)|^2\partial_\zeta^2m_{k,0}^{\pm,b}=\pm\lambda_{k,0}m_{k,0}^{\pm,b}.\nonumber
\end{align}
Observing $m_{k,0}^{\pm,int}+m_{k,0}^{\pm,b}=0$, we have
\begin{align}
m_{k,0}^{\pm,b}=-\left(m_{k,0}^{\pm,int}(\mathfrak{L}|_{\partial D}(x))\right)\exp\left(-\zeta\frac{1\pm i}{|\nabla L(x)|}\sqrt{\frac{\lambda_{k,0}}{2\mu}}\right).\nonumber
\end{align}
Here $\mathfrak{L}|_{\partial D}(x)$ denotes the $x$ in $D$ correspond to a point of $\partial D$ in term of $L(x)$. By solving (\ref{5.10}) we can obtain
the expression of $m_{k,1}^{\pm,b}\cdot\nabla L(x)$. In particular denoting $\Delta_g$ the Laplace Beltrami operator on $\partial D$, we have
\begin{align}
m_{k,1}^{\pm,b}\cdot n=-(1\pm i)\Delta_g\varphi_{k,0}\sqrt{\frac{2\mu}{\lambda_{k,0}^3}}.
\end{align}
By internal terms corresponding to internal ones, we can build $\varphi_{k,1}^{\pm,int},\ m_{k,1}^{\pm,int}$ and $\lambda_{k,1}^{\pm}$ as follows
\begin{align}
divm_{k,1}^{\pm,int}=i\lambda_{k,0}^{\pm}\varphi_{k,1}^{\pm,int}+i\lambda_{k,1}^{\pm}\varphi_{k,0},\label{5.11}\\
\nabla\varphi_{k,1}^{\pm,int}=i\lambda_{k,0}^{\pm}m_{k,1}^{\pm,int}+i\lambda_{k,1}^{\pm}m_{k,0}^\pm,\label{5.12}
\end{align}
with $\left.m_{k,1}^{\pm,int}\cdot n\right|_{\partial D}=\left.m_{k,1}^{\pm,b}\cdot n\right|_{\partial D}$. Utilizing (\ref{5.11}) and (\ref{5.12}) we have
\begin{align}
-\Delta\varphi_{k,1}^{\pm,int}=\lambda_{k,0}^2\varphi_{k,1}^{\pm,int}+2\lambda_{k,0}^\pm\lambda_{k,1}^\pm\varphi_{k,0}\label{5.13}
\end{align}
and
\begin{align}
\partial_n\varphi_{k,1}^{\pm int}=-i\lambda_{k,0}^{\pm}m_{k,1}^{\pm b},\label{5.13'}
\end{align}
with $\left.\frac{\partial\varphi_{k,1}^{\pm,int}}{\partial n}\right|_{\partial D}=-i\lambda^\pm_{k,0}\left.m_{k,1}^{\pm,b}\cdot n\right|_{\partial D}$. Taking the scalar product of (\ref{5.13}) with $\varphi_{k,0}$ yields
\begin{align}
i\lambda_{k,1}^{\pm}=-\frac{1\pm i}{2}\sqrt{\frac{\mu}{2\lambda_{k,0}^3}}\int_{\partial D}|\nabla \varphi_{k,0}|^2.
\end{align}
So we have $Re(i\lambda_{k,1}^{\pm})<0$ if $D$ satisfies $H$-conditions. When $D$ does not satisfy $H$-condition, $\Delta_g\varphi_{k,0}$ may equal to $0$ which leads to, $Re(i\lambda_{k,1}^{\pm})=0$ and $\varphi_{k,1}^{\pm,int}=0,\ m_{k,1}^{\pm,int}=0,\ m_{k,1}^{\pm,b}=0,\ m_{k,0}^{\pm,b}=0$. So the boundary layer don't exist. We end the proof by observing the system of equations which come from $(\ref{5.6})=(\ref{5.7})$ is indeterminate.
\end{proof}
Let $\mathcal{I}\in Z^+$ be the set of $\{k| Re(i\lambda_{k.1}^\pm)<0\}$ and $\mathcal{J}=\{k| Re(i\lambda_{k.1}^\pm)=0\}$. Using the eigenvectors of $A$, we expand $Qu_\epsilon$
\begin{align}
Qu_\epsilon=\sum_{k\in Z^+}\left<Qu_\epsilon,\frac{\nabla \varphi_{k,0}}{\lambda_{k,0}}\right>\frac{\nabla \varphi_{k,0}}{\lambda_{k,0}},\nonumber
\end{align}
Let us split $Qu_\epsilon$ as
\begin{align}
Q_1u_\epsilon=\sum_{k\in \mathcal{I}}\left<Qu_\epsilon,\frac{\nabla \varphi_{k,0}}{\lambda_{k,0}}\right>\frac{\nabla \varphi_{k,0}}{\lambda_{k,0}},\ Q_2u_\epsilon=Qu_\epsilon-Q_1u_\epsilon.\nonumber
\end{align}
The difficulty of our problem is to compute the term $\rho_\epsilon u_\epsilon\times u_\epsilon$. In order to make it clear, we split it.
\begin{align}
div(\rho_\epsilon u_\epsilon\otimes u_\epsilon)&=div(m_\epsilon\otimes u_\epsilon)=div(Pm_\epsilon+Qm_\epsilon)\otimes(Pu_\epsilon+Qu_\epsilon)\nonumber\\
&=div(m_\epsilon\otimes Pu_\epsilon)+div(Pm_\epsilon\otimes Qu_\epsilon)+div(Qm_\epsilon\otimes Qu_\epsilon).\nonumber
\end{align}
Here we know that
\begin{align}
&div(m_\epsilon\otimes Pu_\epsilon)\rightharpoonup div(u\otimes u)\ in\ D'((0,T)\times D),\nonumber\\
&div(Pm_\epsilon\otimes Qu_\epsilon)\rightharpoonup0\  in\ D'((0,T)\times D).\nonumber
\end{align}
We will show that $Q_1u_\epsilon$ converges to 0 in $L^2((0,T)\times D)$ and $div(Q_2u_\epsilon\times Q_2u_\epsilon)$ is a gradient which disappears in the pressure term.\\
{\bf 1. The case of $k\in \mathcal{I}$}\\
Observing $u_\epsilon$ is bounded in $L^2((0,T);H_0^1(D))$, our problem could reduce to a finite number of modes. Indeed, we have
\begin{align}
\sum_{k>K}\int_0^T\left|\left<Qu_\epsilon,\frac{\nabla \varphi_{k,0}}{\lambda_{k,0}}\right>\right|^2dt\leq\frac{C}{\lambda_{K+1}^2}|\nabla u_\epsilon|_{L^2((0,T)\times D)}^2.\nonumber
\end{align}
$\lambda_K\rightarrow\infty\ as\ K\rightarrow\infty$. We only need to consider $\left<Qu_\epsilon,m_{k,0}^\pm\right>$ for fixed $k$.
Using $Qu_\epsilon=Q(\rho_\epsilon u_\epsilon)-\epsilon Q(\varphi_\epsilon u_\epsilon)$, we have
\begin{align}
\left<Qu_\epsilon,\frac{\nabla \varphi_{k,0}}{\lambda_{k,0}}\right>=\left<Q\rho_\epsilon u_\epsilon,\frac{\nabla \varphi_{k,0}}{\lambda_{k,0}}\right>-\epsilon\left<Q\varphi_\epsilon u_\epsilon,\frac{\nabla \varphi_{k,0}}{\lambda_{k,0}}\right>,\label{5.16}
\end{align}
It is easy to obtain that
\begin{align}
\epsilon\left<Q\varphi_\epsilon u_\epsilon,\frac{\nabla \varphi_{k,0}}{\lambda_{k,0}}\right>\leq\epsilon C\|\varphi_\epsilon u_\epsilon\|_{L^s(D)}\leq\epsilon C\|\varphi_\epsilon\|_{L^\kappa(D)}\|\nabla u_\epsilon\|_{L^2(D)}.\nonumber
\end{align}
where $s=\frac{6\kappa}{6+\kappa}$ if $\mathbb{N}=3$ and $s>\kappa$ if $\mathbb{N}=3$, $\kappa=min(2,\gamma)$.
So we have $\epsilon\left<Q\varphi_\epsilon u_\epsilon,\frac{\nabla \varphi_{k,0}}{\lambda_{k,0}}\right>$ converges to 0 in $L^2(0,T)$. We can use $\left<Qm_\epsilon,\frac{\nabla \varphi_{k,0}}{\lambda_{k,0}}\right>$ instead of $\left<Qu_\epsilon,\frac{\nabla \varphi_{k,0}}{\lambda_{k,0}}\right>$. Denoting $a_{k,\epsilon}^\pm=\left<Qu_\epsilon,\pm\frac{\nabla \varphi_{k,0}}{i\lambda_{k,0}}\right>$ and $ b_{k,\epsilon}^\pm=\left<\phi_\epsilon,\phi_{k,0}^\pm\right>$, we have $2a_{k,\epsilon}^\pm=b_{k,\epsilon}^\pm-b_{k,\epsilon}^\mp$.
In Lemma \ref{lemma5} with $n=2$, we have
\begin{align}
\left<\phi_\epsilon,\phi_{k,\epsilon,2}^\pm-\phi_{k,0}^\pm\right>\leq\epsilon^{\frac{1}{\theta'}}(\|m_\epsilon\|_{L^\theta(D)}+\|\varphi_\epsilon\|_{L^\theta(D)})\rightarrow0\ as\ \epsilon\rightarrow0,\ \theta=\min(2,\gamma,\frac{2\gamma}{\gamma+1}).\nonumber
\end{align}
We use $\left<\phi_\epsilon,\phi_{k,\epsilon,n}^\pm\right>$ instead of $\left<\phi_\epsilon,\phi_{k,0}^\pm\right>$. Applying $\phi_{k,\epsilon,n}^\pm$ on both sides of (\ref{5.14}), we obtain
\begin{align}
\frac{d}{dt}b_{k,\epsilon}^\pm(t)-\frac{\overline{i\lambda_{k,\epsilon,2}^\pm}}{\epsilon}b_{k,\epsilon}^\pm(t)=c_{k,\epsilon}^\pm(t),\label{5.15}
\end{align}
where $c_{k,\epsilon}^\pm(t)=\left<M_\epsilon,m_{k,\epsilon,2}^\pm\right>+\epsilon\left<\phi_\epsilon,R_{k,\epsilon,2}^\pm\right>$. one can solve (\ref{5.15}) as
\begin{align}
b_{k,\epsilon}^\pm(t)=b_{k,\epsilon}^{\pm,0}\exp(\overline{i\lambda_{k,\epsilon,2}^\pm}\ \frac{t}{\epsilon})+
\int_0^tc_{k,\epsilon}^\pm(s)\exp(\overline{i\lambda_{k,\epsilon,2}^\pm}\ \frac{t-s}{\epsilon})ds.\label{5.16}
\end{align}
We estimate the term $c_{k,\epsilon}^\pm(t)$.
\begin{align}
&c_{k,\epsilon}^\pm
=\int_D\left\{-Q(div(\rho_\epsilon u_\epsilon\otimes u_\epsilon))+\epsilon\mu\Delta(\varphi_\epsilon u_\epsilon)+\epsilon(\mu+\xi)\nabla div(\varphi_\epsilon u_\epsilon)\right\}m_{k,\epsilon,2}^\pm dx\nonumber\\
&-\int_D\frac{a}{{\epsilon}^2}\nabla((\rho_\epsilon)^\gamma-\gamma(\rho_\epsilon-1)-1)m_{k,\epsilon,2}^\pm dx\nonumber\\
&+\int_D\left\{\alpha Q\nabla[(F(d_\epsilon)+\frac{1}{2}|\nabla d_\epsilon|^2)-(\nabla d_\epsilon\odot\nabla d_\epsilon)+\frac{1}{2\lambda}(d_\epsilon\otimes N_\epsilon-N_\epsilon\otimes d_\epsilon)]\right\}m_{k,\epsilon,2}^\pm dx\nonumber\\
&=\sum_{i=1}^7I_i.\nonumber
\end{align}
Next we estimate each $I_i$.
\begin{align}
I_1&=\int_D-Q(div(\rho_\epsilon u_\epsilon\otimes u_\epsilon))m_{k,\epsilon,2}^\pm dx\nonumber\\
&\leq C\|\sqrt{\rho_\epsilon}u_\epsilon\|_{L^2(D)}\|\rho_\epsilon\|_{L^{2\gamma}}^{\frac{1}{2}}\|\nabla u_\epsilon\|_{L^2(D)},\nonumber\\
&\leq C,\nonumber
\end{align}
\begin{align}
I_2&=\int_D\epsilon\mu\Delta(\varphi_\epsilon u_\epsilon)m_{k,\epsilon,2}^\pm dx\nonumber\\
&\leq\epsilon\|\varphi_\epsilon\|_{L^\kappa(D)}\|\nabla u_\epsilon\|_{L^2(D)},\nonumber\\
&\leq \epsilon C,\nonumber
\end{align}
\begin{align}
I_3&=\int_D\epsilon(\mu+\xi)\nabla div(\varphi_\epsilon u_\epsilon)m_{k,\epsilon,2}^\pm dx\nonumber\\
&\leq\epsilon\|\varphi_\epsilon\|_{L^\kappa(D)}\|\nabla u_\epsilon\|_{L^2(D)},\nonumber\\
&\leq \epsilon C,\nonumber
\end{align}
\begin{align}
I_4&=\int_D-\frac{a}{{\epsilon}^2}\nabla((\rho_\epsilon)^\gamma-\gamma(\rho_\epsilon-1)-1)m_{k,\epsilon,2}^\pm dx\nonumber\\
&\leq C,\nonumber
\end{align}
\begin{align}
I_5=\int_D\nabla[(F(d_\epsilon)+\frac{1}{2}|\nabla d_\epsilon|^2)]m_{k,\epsilon,2}^\pm dx
\leq C,\nonumber
\end{align}
\begin{align}
I_6=\int_D\nabla(\nabla d_\epsilon\odot\nabla d_\epsilon)m_{k,\epsilon,2}^\pm dx
\leq C,\nonumber
\end{align}
\begin{align}
I_7&=\int_D\nabla\frac{1}{2\lambda}(d_\epsilon\otimes N_\epsilon-N_\epsilon\otimes d_\epsilon)m_{k,\epsilon,2}^\pm dx\nonumber\\
&\leq C\|d_\epsilon\|_{L^\infty(D)}\|N_\epsilon\|_{L^2(D)}.\nonumber\\
&\leq  C.\nonumber
\end{align}
Taking $\epsilon\rightarrow0$ in (\ref{5.16}), we have $b_{k,\epsilon}^{\pm}\rightarrow0$, so does $a_{k,\epsilon}^{\pm}$. Thus we have obtained $Q_1u_\epsilon\rightarrow0\ in\ L^2((0,T)\times D)$.\\
{\bf 2. The case of $k\in \mathcal{J}$.}\\
Let $\mathcal{L}(t)$ be the operator defined in Section \ref{four}. We know $\mathcal{L}(t):\ (H^s(D))^{N+1}\rightarrow(H^s(D))^{N+1}$. We claim $\mathcal{L}(-\frac{t}{\epsilon})\binom{\varphi_\epsilon}{Qm_\epsilon}$ is compact in $L^2((0,T);H^{-s}(D))$ for some $s\in(0,1)$. In order to prove the claim, we firstly have $\mathcal{L}(-\frac{t}{\epsilon})\binom{\varphi_\epsilon}{Qm_\epsilon}\in L^2((0,T);H^{-s}(D))$ as $\binom{\varphi_\epsilon}{Qm_\epsilon}\in L^2((0,T);H^{-s}(D))$. From equation (\ref{5.14}), we have $\partial_t\left[\mathcal{L}(-\frac{t}{\epsilon})\binom{\varphi_\epsilon}{Qm_\epsilon}\right]=\mathcal{L}(-\frac{t}{\epsilon})\binom{0}{M_\epsilon}$, which is bounded in $L^2((0,T);H^{-t}(D)),$ $t>\frac{\mathbb{N}}{2}+1$. Then we prove the claim
\begin{align}
\mathcal{L}(-\frac{t}{\epsilon})\binom{\varphi_\epsilon}{Qm_\epsilon}\rightarrow\binom{\varphi}{m}\ in\ L^2(0,T;H^{-s}(D)).\label{5.17}
\end{align}
By virtue of (\ref{2.7}), (\ref{2.10}) and (\ref{2.11}), one obtain
\begin{align}
\binom{\varphi_\epsilon}{Qm_\epsilon}\ &\in\ \left[L^2((0,T)\times D)\right]^{\mathbb{N}+1}+\epsilon^\theta\left[ L^2(0,T;H^{-s}(D))\right]^{\mathbb{N}+1},\label{5.18}
\end{align}
and then
\begin{align}
\binom{\varphi_\epsilon}{Qm_\epsilon}=
\mathcal{L}(-\frac{t}{\epsilon})\binom{\varphi}{m}+\binom{\widetilde{S}_\epsilon(t,x)}{\widetilde{R}_\epsilon(t,x)},\label{5.19}
\end{align}
where $\binom{\varphi}{m}\in \left[L^2((0,T)\times D)\right]^{\mathbb{N}+1}$ and $\binom{\widetilde{S}_\epsilon(t,x)}{\widetilde{R}_\epsilon(t,x)}\rightarrow0\ in\ \left[L^2(0,T;H^{-s}(D))]\right]^{\mathbb{N}+1}$.
We should only consider the solutions of $\mathcal{L}(-\frac{t}{\epsilon})\binom{\varphi}{m}$ in $span\{\phi_k\}_{k\in \mathcal{J}}$. Let $\varphi=\sum_{k\in \mathcal{J}}\widetilde{a}_k(t)\varphi_{k,0}$ and $m=\sum_{k\in \mathcal{J}}\widetilde{b}_k(t)\frac{\nabla\varphi_{k,0}}{\lambda_{k,0}}$. Then we obtain
\begin{align}
\mathcal{L}(-\frac{t}{\epsilon})\binom{\varphi}{m}=\binom{\cdots}{\sum_{k\in \mathcal{J}}\left[\cos(\sqrt{B}\lambda_{k,0})\frac{t}{\epsilon}\widetilde{b}_k(t)-
\sin(\sqrt{B}\lambda_{k,0})\frac{t}{\epsilon}\widetilde{a}_k(t)\right]\frac{\nabla\varphi_{k,0}}{\lambda_{k,0}}}.\nonumber
\end{align}
Let us denote $a_k^\epsilon(t)=\left[\cos(\sqrt{B}\lambda_{k,0})\frac{t}{\epsilon}\widetilde{b}_k(t)-
\sin(\sqrt{B}\lambda_{k,0})\frac{t}{\epsilon}\widetilde{a}_k(t)\right]\frac{1}{\lambda_{k,0}}$ and\\$v_\epsilon=\sum_{k\in \mathcal{J}}a_k^\epsilon(t)\nabla\varphi_{k,0}$. We have
\begin{align}
div(Qm_\epsilon\otimes u_\epsilon)=div\left[(v_\epsilon+\widetilde{R}_\epsilon(t,x))\otimes u_\epsilon\right]=div(v_\epsilon\otimes u_\epsilon)+
div(\widetilde{R}_\epsilon(t,x)\otimes u_\epsilon),
\end{align}
and
\begin{align}
div(\widetilde{R}_\epsilon(t,x)\otimes u_\epsilon)\rightharpoonup0\ in\ D'((0,T)\times D).\nonumber
\end{align}
We claim the following holds:
\begin{align}
div(v_\epsilon\otimes (Qu_\epsilon-v_\epsilon)\rightharpoonup0\ in\ D'((0,T)\times D).\label{5.20}
\end{align}
Indeed using (\ref{5.19}), we have
\begin{align}
\|Qu_\epsilon-v_\epsilon\|_{L^2(0,T;H^{-s}(D))}\leq \epsilon\|\varphi_\epsilon u_\epsilon\|_{L^2(0,T;H^{-s}(D))}+\|\widetilde{R}_\epsilon(t,x)\|_{L^2(0,T;H^{-s}(D))}.\nonumber
\end{align}
So $Qu_\epsilon-v_\epsilon\rightarrow0\ in\ L^2(0,T;H^{-s}(D))$. As $v_\epsilon\in L^2((0,T)\times D)$, we can find $v_\epsilon^\delta\in L^2(0,T;H^{s}(D))$ such that $\|v_\epsilon-v_\epsilon^\delta\|_{L^2((0,T)\times D)}\leq\delta$. Then we have
\begin{align}
div(v_\epsilon\otimes (Qu_\epsilon-v_\epsilon)&=div\left[(v_\epsilon-v_\epsilon^\delta)\otimes(u_\epsilon-v_\epsilon)+v_\epsilon^\delta\otimes(u_\epsilon-v_\epsilon)\right]\nonumber\\
&\leq C\delta+o(\epsilon).
\end{align}
Passing to the limit for  $\epsilon\rightarrow0$, and then $\delta\rightarrow0$, we have prove the claim (\ref{5.20}). Finally, we consider $div(v_\epsilon\otimes v_\epsilon)$.
\begin{align}
div(v_\epsilon\otimes v_\epsilon)=div\sum_{k,l\in \mathcal{J}}h_{k,l}^\epsilon(t)\nabla \varphi_{k,0}\otimes\nabla \varphi_{l,0}.
\end{align}
where $h_{k,l}^\epsilon(t)=a_k^\epsilon(t)a_l^\epsilon(t)$. If $\lambda_{k,0}\neq\lambda_{l,0}$, by Riemann-Lebesgue Lemma we have
\begin{align}
h_{k,l}^\epsilon(t)\rightharpoonup0\ in\ L^1(0,T).\nonumber
\end{align}
It only leaves with the sum on pairs of $(l,k)$, such that $\lambda_{k,0}=\lambda_{l,0}\ k,j\in \mathcal{J}$. Let denote $\mathfrak{D}=\{k|\lambda_{k,0}=\lambda_{l,0},k\in \mathcal{J},k\in \mathcal{J}\}$. One easily has
\begin{align}
div\sum_{k,j\in \mathfrak{D}}h_{k,l}^\epsilon(t)\nabla\varphi_{k,0}\otimes\nabla\varphi_{l,0}=div\sum_{k>j\in \mathfrak{D}}h_{k,l}^\epsilon(t)\left(\nabla\varphi_{k,0}\otimes\nabla\varphi_{l,0}+\nabla\varphi_{l,0}\otimes\nabla\varphi_{k,0}\right),
\end{align}
and
\begin{align}
div\left(\nabla\varphi_{k,0}\otimes\nabla\varphi_{l,0}+\nabla\varphi_{l,0}\otimes\nabla\varphi_{k,0}\right)
=-\lambda_{k,0}^2\nabla(\varphi_{k,0}\varphi_{l,0})+\nabla(\nabla\varphi_{k,0}\cdot\nabla\varphi_{l,0}).\nonumber
\end{align}
Thus we have proved that the limit of $div(v_\epsilon\otimes v_\epsilon)$ is a gradient, so does the limit of $div(Qm_\epsilon\otimes Qu_\epsilon)$, which end the proof of Theorem \ref{bounded}.\\

\noindent{\bf Acknowledgements:} We would like to thank Professor Chun Liu for his advice to this problem.


\begin{thebibliography}{99}
\bibitem{PlN}
P.L. Lions, N. Masmoudi. Incompressible Limit for a Viscous Compressible Fluid. J. Math. Pures Appl., 1998, \textbf{77} : 585 - 627

\bibitem{bepn}
B. Desjardins, E. Grenier, P.-L. Lions, N. Masmoudi. Incompressible Limit for Solutions of the Isentropic Navier-Stokes Equations with Dirichlet Boundary Conditions. J. Math. Pures Appl, 1999, \textbf{78} : 461 - 471

\bibitem{DG}
B. Desjardins, E. Grenier. Low Mach Number Limit of Viscous Compressible Flows in the Whole Space. Proc. R. Soc. Lond. A, 1999, \textbf{455} : 2271 - 2279


\bibitem{maj}
A. Majda. Compressible Fluid Flow and Systems of Conservation Laws in Several Space Variables. Applied Mathematical Sciences, 53, 1984


\bibitem{lions2}
P.L. Lions. Mathematical Topics in Fluid Dynamics. Vol.2. Compressible models. Oxford Science Publication: Oxford, 1998.

\bibitem{temam}
R. Temam. Navier-Stokes Equations, rev. ed., Studies in Mathematics and its Applications 2. North-Holland: Amsterdam, 1977.


\bibitem{sch}
N. Garofalo, F. Seg\`{a}la. Another step toward the solution of the Pompeiu problem in the plane.
 Comm. Partial Differential Equations, 1993, \textbf{18} : 491 - 503

\bibitem{evens}
L.C. Evans. Partial Differential Equations, Graduate Studies in Mathematics 19. Amer. Math. Soc., Providence 1998
\bibitem{f2}
E. Feireisl. Dynamics of Viscous Compressible Fluids. Oxford : Oxford University Press, 2004

\bibitem{LH}
Y.M. Chu, Y.H. Hao, X.G. Liu. Global Weak Solutions to a General Liquid Crystals System. Discrete and Continuous Dynamical System - A, 2013, {\bf 33}: 2681--2710

\bibitem{liu}
X.G. Liu, J. Qing. Globally Weak Solutions to the Flow of Compressible Liquid Crystals System. Discrete and Continuous Dynamical System - A, 2013, {\bf 33}: 757--788





\end{thebibliography}
\end{document}